\newcommand{\vp}{\varphi}
\newcommand{\mb}{\mathbb}
\newcommand{\A}{{\mathfrak A}}
\newcommand{\M}{{\mathfrak M}}
\newcommand{\Ao}{{\mathfrak A}_0}
\newcommand{\mc}{\mathcal}
\newcommand{\D}{{\mc D}}
\newcommand{\id}{{I}}
\newcommand{\Hil}{\mc H}
\newcommand{\AC}{\mathscr{A}}
\newcommand{\ACo}{\mathscr{A}_0}
\newcommand{\PA}{{\mathcal P}_{\Ao}(\A)}
\newcommand{\SSA}{{\mathcal S}_{\Ao}(\A)}
\newcommand{\mult}{\,{\scriptstyle \square}\,}
\newtheorem{theorem}{Theorem}[section]
\newtheorem{corollary}[theorem]{Corollary}
\newtheorem{lemma}[theorem]{Lemma}
\newtheorem{proposition}[theorem]{Proposition}
\theoremstyle{definition}
\newtheorem{defn}[theorem]{Definition}
\newtheorem{remark}[theorem]{Remark}
\newtheorem{example}[theorem]{Example}
\newtheorem*{xrem}{Remark}
\numberwithin{equation}{section}
\begin{document}




\title{Eigenstates of CQ*-algebras}%

\keywords{Banach C*-modules, Partial
	algebras of operators}


\author[F. Bagarello]{Fabio Bagarello}

\address{Dipartimento di Ingegneria, Universit\`{a} di Palermo, I - 90128 Palermo, and I.N.F.N., Sezione di Catania, Italy. 
}
\email{fabio.bagarello@unipa.it}

\author[H. Inoue]{Hiroshi Inoue}

\address{Department of Economics, Kyushu Sangyo University, 2-3-1 Matsukadai Higashi-ku, Fukuoka, Japan. 
}
\email{h-inoue@ip.kyusan-u.ac.jp}
\author[C. Trapani]{Camillo Trapani}
\address{Dipartimento di Matematica e Informatica, Universit\`a degli Studi  di Palermo, Via Archirafi n. 34,  I-90123 Palermo, Italy. 
}
\email{camillo.trapani@unipa.it}

\author[S. Triolo]{Salvatore Triolo}

\address{Dipartimento di Ingegneria, Universit\`{a} di Palermo, I - 90128 Palermo, Italy. 
}
\email{salvatore.triolo@unipa.it}
\date{}
\maketitle
\begin{abstract}
Motivated by some recent results, we consider the notion of eigenstate (and eigenvalue) for an element $X$ of a CQ*-algebras and the consequences on algebraic quantum dynamics and on its related derivations are investigated. 
\end{abstract}

\subjclass{Mathematics Subject Classification (2020). Primary 46L08; Secondary 46L51, 47L60}



\section{Introduction} 
The notion of eigenvector, which is very familiar when one deals with linear  operators, has been considered in the abstract setting of C*-algebras \cite{3}, using positive linear functionals. Extensions of this approach have been proposed in \cite{bagintri} and \cite{8} to more general contexts, with the aim of using them in the mathematical description of quantum systems where unbounded operators appear in a natural fashion. In particular in \cite{bagintri} the attention has been focused to the case of quasi*-algebras (see \cite{10} for a synthesis on this subject) where {\em eigenstates} have been described through certain {\em invariant} positive sesquilinear forms, shortly, ips-forms. Their main feature consists in the fact that they allow a GNS construction similar to that induced by positive linear functional (or states) on *-algebras and this is clearly an essential tool when one wants to pass from abstract *-algebras or quasi *-algebras to concrete realizations with operators. An interesting application discussed in \cite{bagintri} is related to ladder elements which reproduce, at an algebraic level, the (pseudo-)bosonic commutation relations, \cite{bagspringer}.

In this paper we consider the case of eigenstates of a  CQ*-algebra.  This structure is obtained, roughly speaking, by taking the completion $\A$ of a C*-algebra $\Ao$ under a norm $\|\cdot\|$, weaker than the original norm $\|\cdot\|_0$ of $\Ao$ and enjoying some additional properties, { coupling the two norms}.

In Section 3, we shortly discuss positive linear functionals on a CQ*-algebra
obtained by extending to $\A$ positive linear functionals on $\Ao$ 
which are continuous with respect to the norm $\|\cdot\|$ of $\A$. It is shown how the GNS construction can be adapted to this situation to get a *-representation of $\A$. This is possible but one has to pay a (little) price: the use of the notion of {\em unbounded vector} (due to M.Tomita \cite{7}) involves representations that live beyond the Hilbert space.
The notion of eigenvalue and eigenstate are then introduced in Section 4 using the positive linear functionals introduced in Section 3 (Section 4.1) or {invariant positive sesquilinear} (ips) forms (Section 4.2) as in \cite{bagintri}. Here we consider  the case of a *-semisimple CQ*-algebra (which by definition possesses a sufficient number of bounded ips-forms). In this case, the treatment remains within Hilbert spaces.    

Section 5 is devoted to the study of the role played by eigenvectors and eigenstates for the dynamics both at integral level (*-automorphisms) and at the infinitesimal one (*-derivations); As shown in the paper, several classical properties, well known for C*-algebras, generalize to our environment, under appropriate (but light) assumptions.

Finally, in Section 6, starting from a CQ*-algebra $(\A[\|\cdot\|], \Ao$)  we propose the construction of a locally convex *-algebra $\A_1$, with $\Ao\subset \A_1\subset \A$ which has the property that every $\|\cdot\|$-continuous positive linear functional on $\Ao$ extends to an {\em admissible} positive linear functional on $\A_1$. This is quite a well behaved situation, since admissible positive linear functional give rise, via GNS construction, to bounded operators. Section \ref{sect7} contains our conclusions.

\section{Preliminaries}

{A {\em quasi *-algebra} $(\AC, \ACo)$ is a pair consisting of a vector space $\AC$ and a *-algebra $\ACo$ contained in $\AC$ as a subspace and such that 
\begin{itemize}
	\item $\AC$ carries an involution $a\mapsto a^*$ extending the involution of $\ACo$;
	\item $\AC$ is  a bimodule over $\AC_0$ and the module multiplications extend the multiplication of $\ACo$. In particular, the following associative laws hold:
	\begin{equation}\notag \label{eq_associativity}
		(ca)d = c(ad); \ \ a(cd)= (ac)d, \quad \forall \ a \in \AC, \  c,d \in \ACo;
	\end{equation}
	\item $(ac)^*=c^*a^*$, for every $a \in \AC$ and $c \in \ACo$.
\end{itemize}

The
\emph{identity} or {\it unit element} of $(\AC, \ACo)$, if any, is a necessarily unique element $\id\in \ACo$, such that
$a\id=a=\id a$, for all $a \in \AC$.

We will always suppose that
\begin{align*}
	&ac=0, \; \forall c\in \ACo \Rightarrow a=0 \\
	&ac=0, \; \forall a\in \AC \Rightarrow c=0. 
\end{align*}
Clearly, both these conditions are automatically satisfied if $(\AC, \ACo)$ has an identity $\id$.\\}


Let $\Ao$ be an unital C*-algebra with C*-norm $\| \cdot \|_0$. Assume that another  norm $\| \cdot\|$ is defined on $\Ao$, satisfying the following properties:
\begin{enumerate}[(i)]
	\item $\|A\|\leqq \|A\|_0$, for all $A\in \Ao$
	\item $\|AB\| \leqq \|A\|\|B\|_0$, for all $A,B\in \Ao$
	\item $\|A^\ast\|=\|A\|$, for all $A\in \Ao$.
\end{enumerate}
By (ii) and (iii) we have
\begin{enumerate}[(iii)$^\prime$]
	\item $\|AB\|\leqq \|A\|_0\|B\|$, for all $A,B\in \Ao$.
\end{enumerate}
{We denote by $\A$ the completion of the normed space $(\Ao,\| \cdot\|)$. 
For any $X\in \A$ we put 
\begin{eqnarray}
	\| X\|^\sim =\lim_{n\rightarrow \infty}\| A_n\|,\nonumber
\end{eqnarray}
{where $\{ A_n\}$ is a sequence in $\Ao$ with $\|\cdot\|$-$\lim_{n\rightarrow\infty}A_n=X$. As usual, the extension $\|\cdot\|^\sim$ on $\A$ of the norm  $\|\cdot\|$ of $\Ao$, will simply be denoted by the same symbol $\|\cdot\|$.}
As shown in \cite[Proposition 5.1,3]{10} $(\A[\|\cdot\|], \Ao )$ is a (proper) {\em CQ*-algebra}, shortly, CQ*-algebra. We often say also that $\A$ is  a CQ*-{\em algebra over $\Ao$}.
The pair $(\A, \Ao)$ is a  quasi $\ast$-algebra
with the following multiplications and involution $\ast$:\\
For $X\in \A$ and $A\in \Ao$
\begin{itemize}
\item $XA := \|\cdot \|$-$\lim_{n\rightarrow\infty}A_nA$, $AX:= \|\cdot\|$-$\lim_{n\rightarrow\infty}AA_n$
\item $X^\ast:= \|\cdot\|$-$\lim_{n\rightarrow\infty}A_n^\ast$,
\end{itemize}
where $\{ A_n\}$ is a sequence in $\Ao$ with $\|\cdot\|$-$\lim_{n\rightarrow\infty}A_n=X$, and it satisfies
$$
\|XA\|\leqq\|X\|\|A\|_0, \;\;\; \|AX\|\leq\|A\|_0\|X\|, \quad \|X^\ast\|=\|X\|.
$$}
\begin{example}\label{esempio ellepi}
  The space $L^p([0,1])$, with $1\leq p< +\infty$ is a Banach $L^\infty([0,1])$-bimodule. The couple
$(L^p([0,1]), L^\infty([0,1])$ may be regarded as an {abelian}
CQ*-algebra.

\end{example}
\begin{example}\label{esempio semi}
Let $\M$ be a von Neumann algebra  on a Hilbert space $\Hil$ and $\vp$ a normal faithful
semifinite trace defined on $\M_+$. For each $p\geq 1$, let
$${\mc J_p}=\{ X \in \M: \vp(|X|^p)<\infty \}.$$ Then ${\mc J_p}$ is a
*-ideal of $\M$. Following \cite{nelson}, we denote with
$L^p(\vp)$ the Banach space completion of ${\mc J_p}$ with respect
to the norm
$$\|X\|_{ {p, \vp}} := \vp(|X|^p)^{1/p}, \quad X \in {\mc J_p}.$$
One usually defines $L^\infty(\vp): = \M$. Thus, if $\vp$ is a
finite trace, then $L^\infty(\vp) \subset L^p(\vp)$ for every
$p\geq 1$. As shown in \cite{nelson}, if $X \in L^p(\vp)$, then
$X$ is a measurable operator (see \cite{nelson}). In this case $(L^\infty(\vp), L^p(\vp))$ may be regarded as a
CQ*-algebra.

\end{example}
\section{Positive linear functionals of the CQ*-algebra $\A$}

Let $\omega$ be a $\|\cdot\|$-continuous positive linear functional on $\Ao$, that is,
\begin{equation}
{\rm There \; exists}\; \gamma>0; \;\; |\omega(A)|\leqq \gamma\|A\| \;\; {\rm for \; all}\; A\in \Ao. \label{3.1}
\end{equation}
We put
\begin{eqnarray}
\overline{\omega}(X)=\lim_{n\rightarrow\infty}\omega(A_n), \;\; X\in \A . \nonumber
\end{eqnarray}
where $\{ A_n\}$ is a sequence in $\Ao$ such that $\|\cdot\|$-$\lim_{n\rightarrow\infty}A_n=X$.
Then $\overline{\omega}$ is well-defined, that is, $\lim_{n\rightarrow\infty}\omega(A_n)$ exists in $\mathbb{C}$ and $\overline{\omega}(X)$ does not depend on the choice of the sequence $\{ A_n\}$ in $\Ao$, and it is a $\|\cdot\|$-continuous hermitian linear functional on $\A$ which is an extension of $\omega$.

We put $\A^+:=\overline{\Ao^+}$, {the $\|\cdot\|$-closure of the set of positive elements of $\Ao$.}

 \begin{defn}
A linear functional $\omega$ which is defined on $\A$ will be called positive if $\omega(A) \geq 0$, for every $A\in \A^+$.
\end{defn}
This implies that $\omega$ is continuous on positive elements \cite[Lemma 3.1.48]{10}.

From this definition, $\overline{\omega}$ is a positive linear functional on $(\A, \Ao)$.

We shall consider a GNS-construction for $\overline{\omega}$.
Let $(\pi_\omega,\lambda_\omega,\Hil_\omega)$ be the GNS-construction for the positive linear functional $\omega$ on the C*-algebra $\Ao$, that is, $\pi_\omega$ is a $\ast$-representation of $\Ao$ into the C*-algebra $B(\Hil_\omega)$ of all bounded linear operators on a Hilbert space $\Hil_\omega$ and $\lambda_\omega$ is a vector representation of $\Ao$ in $\Hil_\omega$, that is, it is a linear mapping of $\Ao$ onto the dense subspace $\lambda_\omega(\Ao)$ in $\Hil_\omega$ satisfying $\lambda_\omega(AB)=\pi_\omega(A)\lambda_\omega(B)$ for all $A,B\in \Ao$.
Here we denote by $(\cdot |\cdot)$ the inner product of a Hilbert space $\Hil_{\omega}$.
For any $A,B\in \Ao$ we have
\begin{equation}
\|\pi_\omega(A)\lambda_\omega(B)\|^2=\omega(B^\ast A^\ast AB) \leqq \gamma \| B\|_0^2 \|A^\ast A\|. \label{3.2}
\end{equation}
{Take an arbitrary $X\in \A$ and let
 $\{ A_n\}$ be a sequence in $\Ao$, $\|\cdot\|$-converging to $X$.}
By \eqref{3.2} we have
\begin{eqnarray}
\| \pi_\omega(A_m)\lambda_\omega(B)-\pi_\omega(A_n)\lambda_\omega(B)\|^2
\leqq \gamma \|B\|_0^2 \|(A_m-A_n)^\ast (A_m-A_n)\|, \nonumber
\end{eqnarray}
but because the multiplication $AB$ is not $\|\cdot\|$-continuous, $$\lim_{m,n\rightarrow\infty}\|(A_m-A_n)^\ast(A_m-A_n)\|\neq 0$$ in general.
Hence $\lim_{n\rightarrow\infty}\pi_\omega(A_n)\lambda_\omega(B)$ may fail to exist in $\Hil_\omega$.
For this reason, we need to generalize the usual operator representations to form representations.
For that, we define the notions of unbounded vectors in a Hilbert space \cite{7}.
Let $\Hil$ be a Hilbert space.
Following M. Tomita we say that a conjugate linear functional $v$,  defined in a subspace $\D$ of $\Hil$, is an {\em unbounded vector} in $\Hil$ with domain $\D$.  The value of $v$ at $\xi$ in $\D$ is denoted by $<v,\xi>$.
 We denote by $v^\ast$ the complex conjugation, that is, $<v^\ast,\xi>=\overline{<v,\xi>}$, $\xi\in\D$.
Then $v^\ast$ is a linear functional on $\D$. We denote by $v(\D)$ the set of all unbounded vectors in $\Hil$ with domain $\D$. Then $v(\D)$ is a vector space under the operations:
\begin{eqnarray}
<v_1+v_2,\cdot>
&=& <v_1,\cdot>+<v_2,\cdot>, \nonumber \\
<\alpha v,\cdot>
&=& \alpha <v,\cdot>\nonumber
\end{eqnarray}
for $v,v_1,v_2\in v(\D)$ and $\alpha\in\mathbb{C}$.
An unbounded vector $v$ in $v(\D)$ is called {\em bounded} if $\D$ is dense in $\Hil$ and it can be extended to a continuous conjugate linear functional on $\Hil$.
Then the extension of $v$ is identified with the element of $\Hil$ and it is denoted by $[v]$.
Here let us denote by $\mathcal{L}^\dagger(\D,v(\D))$ the set of all linear mappings from $\D$ to $v(\D)$.
Then $\mathcal{L}^\dagger(\D,v(\D))$ is a quasi $\ast$-algebra over $\mathcal{L}^\dagger(\D)$ equipped with the following operations and involution $X\rightarrow X^\dagger$: for $X_1,X_2\in v(\D)$, $A\in\mathcal{L}^\dagger(\D)$ and $\xi,\eta\in\D$
\begin{eqnarray}
(X_1+X_2)\xi
&=& X_1\xi+X_2\xi,  \nonumber \\
(\alpha X)\xi
&=& \alpha(X\xi),  \nonumber \\
<AX\xi,\eta>
&=& <X\xi,A^\dagger \eta>, \nonumber \\
<XA\xi,\eta>
&=& <X(A\xi),\eta>, \nonumber
\end{eqnarray}
and
\begin{eqnarray}
<X^\dagger \xi,\eta>=\overline{<X\eta,\xi>}.\nonumber
\end{eqnarray}
\begin{defn} \label{def31}
Let $\mathscr{A}$ be a quasi $\ast$-algebra over $\mathscr{A}_0$. A linear mapping $\pi$ of $\mathscr{A}$ into $\mathcal{L}^\dagger(\D,\mathcal{V}(\D))$ is said to be a $\ast$-representation of $\mathscr{A}$ into $\mathcal{L}^\dagger(\D,\mathcal{V}(\D))$ if $\pi(ax)=\pi(a)\pi(x)$, $\pi(xa)=\pi(x)\pi(a)$ and $\pi(x^\ast)=\pi(x)^\dagger$ for all $x\in \mathscr{A}$ and $a\in\mathscr{A}_0$. Here we denote $\D$ and $\Hil$ by $\D(\pi)$ and $\Hil_\pi$, respectively.
\end{defn}
Let $\pi$ be a $\ast$-representation of $\mathscr{A}$ into $\mathcal{L}^\dagger(\D(\pi),\Hil_\pi)$.
Then $\pi(\mathscr{A})$ is a quasi $\ast$-algebra over $\pi(\mathscr{A}_0)$.
For a GNS-construction of $\A$ for $\overline{\omega}$ we have the following
\begin{proposition} \label{prop32}
Let $\omega$ be a $\|\cdot\|$-continuous positive linear functional on $\Ao$. We can define a triple $(\pi_{\overline{\omega}},\lambda_{\overline{\omega}},\Hil_{\overline{\omega}})$ satisfying
\begin{itemize}
\item $\pi_{\overline{\omega}}$ is a $\ast$-representation of $\A$ into $\mathcal{L}^\dagger(\lambda_\omega(\Ao),v(\lambda_\omega(\Ao)))$.
\item $\lambda_{\overline{\omega}}$ is a linear mapping from $\A$ to $v(\lambda_\omega(\Ao))$ satisfying
\begin{eqnarray}
\lambda_{\overline{\omega}}(XB)=\pi_{\overline{\omega}}(X)\lambda_\omega(B)\nonumber
\end{eqnarray}
for all $X\in \A$ and $B\in \Ao$.
\end{itemize}
Here $\Hil_{\overline{\omega}}=\Hil_\omega$, $\D(\pi_{\overline{\omega}})=\lambda_\omega(\Ao)$, and $\pi_{\overline{\omega}}$ and $\lambda_{\overline{\omega}}$ are extensions of $\pi_\omega$ and $\lambda_\omega$, respectively.
\end{proposition}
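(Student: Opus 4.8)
The plan is to define $\pi_{\overline{\omega}}$ and $\lambda_{\overline{\omega}}$ directly through the $\|\cdot\|$-continuous hermitian extension $\overline{\omega}$, thereby reinterpreting the (generally norm-divergent) sequence $\pi_\omega(A_n)\lambda_\omega(B)$ as a genuine unbounded vector, i.e.\ a conjugate linear functional on $\D:=\lambda_\omega(\Ao)$. Concretely, for $X\in\A$ and $B,C\in\Ao$ I would set
\[
\langle \pi_{\overline{\omega}}(X)\lambda_\omega(B),\lambda_\omega(C)\rangle:=\overline{\omega}(C^\ast X B),\qquad \langle \lambda_{\overline{\omega}}(X),\lambda_\omega(C)\rangle:=\overline{\omega}(C^\ast X).
\]
These right-hand sides are meaningful because $C^\ast X\in\A$ is the left module action and $(C^\ast X)B=C^\ast(XB)\in\A$ by the associativity of the quasi $\ast$-algebra $(\A,\Ao)$, so $\overline{\omega}$ may be applied. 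Linearity in $X$ and in $B$ is then immediate.

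First I would verify that these formulas really define unbounded vectors. For fixed $X,B$ the assignment $\lambda_\omega(C)\mapsto\overline{\omega}(C^\ast X B)$ is conjugate linear in $C$, so the only genuine point---and the step I expect to be the main, though not deep, obstacle---is \emph{well-definedness}: the value must depend on $\lambda_\omega(C)$ and not on $C$. Thus if $\lambda_\omega(D)=0$, i.e.\ $\omega(D^\ast D)=0$, one must show $\overline{\omega}(D^\ast X B)=0$. Picking $A_n\in\Ao$ with $\|\cdot\|$-$\lim_n A_n=X$, properties (ii) and (iii)$'$ give $D^\ast A_n B=(D^\ast A_n)B\to(D^\ast X)B$ in $\|\cdot\|$, whence $\|\cdot\|$-continuity of $\overline{\omega}$ yields $\overline{\omega}(D^\ast X B)=\lim_n\omega(D^\ast A_n B)$; but $\omega(D^\ast A_n B)=(\lambda_\omega(A_n B)|\lambda_\omega(D))=0$ for every $n$, so the limit is $0$. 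The same approximation shows $\langle\pi_{\overline{\omega}}(X)\lambda_\omega(B),\lambda_\omega(C)\rangle=\lim_n(\pi_\omega(A_n)\lambda_\omega(B)|\lambda_\omega(C))$, making precise the sense in which $\pi_{\overline{\omega}}(X)\lambda_\omega(B)$ is the (weak) limit of the diverging sequence.

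It then remains to check the algebraic identities, which I would carry out by short computations on the pairing $\langle\,\cdot\,,\lambda_\omega(C)\rangle$, using only that $\overline{\omega}$ is hermitian, that $\pi_\omega(A)^\dagger\restr\D=\pi_\omega(A^\ast)$, and the associativity of the module actions. For $A\in\Ao$, $X\in\A$ one has $\langle\pi_{\overline{\omega}}(AX)\lambda_\omega(B),\lambda_\omega(C)\rangle=\overline{\omega}(C^\ast AXB)$, which equals $\langle\pi_{\overline{\omega}}(X)\lambda_\omega(B),\lambda_\omega(A^\ast C)\rangle=\langle\pi_\omega(A)\pi_{\overline{\omega}}(X)\lambda_\omega(B),\lambda_\omega(C)\rangle$; likewise $\pi_{\overline{\omega}}(XA)=\pi_{\overline{\omega}}(X)\pi_\omega(A)$ follows from $(XA)B=X(AB)$, and $\pi_{\overline{\omega}}(X^\ast)=\pi_{\overline{\omega}}(X)^\dagger$ reduces to the hermiticity identity $\overline{\overline{\omega}(B^\ast X C)}=\overline{\omega}(C^\ast X^\ast B)$. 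The intertwining relation $\lambda_{\overline{\omega}}(XB)=\pi_{\overline{\omega}}(X)\lambda_\omega(B)$ is read off directly from the two defining formulas. Finally, restricting to $X=A\in\Ao$ gives $\overline{\omega}(C^\ast AB)=\omega(C^\ast AB)=(\lambda_\omega(AB)|\lambda_\omega(C))$, so the relevant functionals are bounded and identified with the vectors $\lambda_\omega(AB),\lambda_\omega(A)\in\Hil_\omega$; hence $\pi_{\overline{\omega}}$ and $\lambda_{\overline{\omega}}$ extend $\pi_\omega$ and $\lambda_\omega$, with $\Hil_{\overline{\omega}}=\Hil_\omega$ and $\D(\pi_{\overline{\omega}})=\lambda_\omega(\Ao)$, as claimed.
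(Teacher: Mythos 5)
Your proposal is correct, and at bottom it is the same construction as the paper's: the numbers you call $\overline{\omega}(C^\ast XB)$ are exactly the limits $\lim_{n}(\pi_\omega(A_n)\lambda_\omega(B)|\lambda_\omega(C))$ that the paper takes as the definition of the pairing, and both arguments run on the same two ingredients, namely the $\|\cdot\|$-continuity of $\omega$ and the inequality $\|C^\ast AB\|\leq \|B\|_0\|C\|_0\|A\|$ (the paper's estimate \eqref{3.3}). The packaging differs in one meaningful way. The paper first shows that $(\pi_\omega(A_n)\lambda_\omega(B)|\lambda_\omega(C))$ is Cauchy and defines the pairing as its limit; with that definition, dependence only on the vectors $\lambda_\omega(B),\lambda_\omega(C)$ (rather than on the representatives $B,C$) is manifest, while independence of the approximating sequence is what has to be observed. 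You instead route the definition through the extension $\overline{\omega}$ built at the start of Section 3, so sequence-independence is automatic, but the quotient problem modulo $\ker\lambda_\omega$ appears; you treat it correctly in the $C$ slot. One small correction: contrary to your phrase ``the only genuine point'', the same issue arises in the $B$ slot, since $\pi_{\overline{\omega}}(X)$ must be well defined on $\D=\lambda_\omega(\Ao)$, i.e.\ $\lambda_\omega(B)=0$ must force $\overline{\omega}(C^\ast XB)=0$ for all $C$. This is closed at once by your own displayed identity $\langle\pi_{\overline{\omega}}(X)\lambda_\omega(B),\lambda_\omega(C)\rangle=\lim_{n}(\pi_\omega(A_n)\lambda_\omega(B)|\lambda_\omega(C))$, whose right-hand side visibly depends only on the vector $\lambda_\omega(B)$ (each term vanishes when $\lambda_\omega(B)=0$), so it is a missing sentence rather than a gap. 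A further merit of your write-up is that it actually carries out the algebraic verifications --- $\pi_{\overline{\omega}}(AX)=\pi_\omega(A)\pi_{\overline{\omega}}(X)$, $\pi_{\overline{\omega}}(XA)=\pi_{\overline{\omega}}(X)\pi_\omega(A)$, $\pi_{\overline{\omega}}(X^\ast)=\pi_{\overline{\omega}}(X)^\dagger$, and the extension property --- which the paper compresses into ``it is easily shown''.
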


\begin{proof}
For any $A,B,C \in \Ao$ we have
\begin{align}
|(\pi_\omega(A)\lambda_\omega(B)|\lambda_\omega(C))|
&= |\omega(C^\ast AB)|\nonumber\\
&\leqq \gamma \|C^\ast AB\| \nonumber\\
&\leqq \gamma \|B\|_0\|C\|_0\|A\|. \label{3.3}
\end{align}
Take an arbitrary $X\in \A$. Let $\{ A_n\}$ be a sequence in $\Ao$ which $\|\cdot\|$-$\lim_{n\rightarrow\infty}A_n=X$.
By \eqref{3.3} we have
\begin{eqnarray}
\lim_{m,n\rightarrow\infty}|(\pi_\omega(A_m)\lambda_\omega(B)|\lambda_\omega(C))-(\pi_\omega(A_n)\lambda_\omega(B)|\lambda_\omega(C)) |\nonumber \\
\leqq \gamma \|B\|_0\|C\|_0 \lim_{m,n\rightarrow\infty}\|A_m-A_n\|=0\nonumber
\end{eqnarray}
for all $B,C\in \Ao.$ We can define a linear mapping $\pi_{\overline{\omega}}(X)$ from $\lambda_\omega(\Ao)$ to $\mathcal{V}(\lambda_\omega(\Ao))$ by
\begin{eqnarray}
<\pi_{\overline{\omega}}(X)\lambda_\omega(B),\lambda_\omega (C))>
:= \lim_{n\rightarrow\infty}(\pi_\omega(A_n)\lambda_\omega(B)|\lambda_\omega(C))\nonumber
\end{eqnarray}
for $B,C\in \Ao$. Then it is easily shown that $\pi_{\overline{\omega}}$ is a $\ast$-representation of $\A$ into $\mathcal{L}^\dagger(\lambda_\omega(\Ao),\mathcal{V}(\lambda_\omega(\Ao)))$ which is an extension of $\pi_\omega$ and $\lambda_{\overline{\omega}}$ is a vector representation of $\A$ into $\mathcal{V}(\lambda_\omega(\Ao))$ satisfying
\begin{eqnarray}
\lambda_{\overline{\omega}}(XB)
=\pi_{\overline{\omega}}(X) \lambda_\omega(B)\nonumber
\end{eqnarray}
for all $B\in \Ao$, which is an extension of $\lambda_\omega$. This completes the proof.
\end{proof}

The triple $(\pi_{\overline{\omega}},\lambda_{\overline{\omega}}, \Hil_{\overline{\omega}})$ in Proposition \ref{prop32} is called the GNS-construction of $\A$ for $\overline{\omega}$.

\section{Eigenstate}

\subsection{Eigenstates and spectrums}
Let $\omega$ be a $\|\cdot\|$-continuous positive linear functional on $\Ao$. If $\omega(I)=1$, then $\omega$ is called a  {\em state} of $\Ao$. If $\omega$ is a state of $\Ao$, then $\overline{\omega}$ is state of $\A$. We denote by $E(\Ao)$ (resp. $E(\A)$) the set of all $\|\cdot\|$-continuous states of $\Ao$ (resp. $\A$). Then
\begin{eqnarray}
\omega\in E(\Ao) \mapsto \overline{\omega}\in E(\A) \nonumber
\end{eqnarray}
is a bijection. In analogy with \cite{3,bagintri,8} we use the following definition of eigenstate and eigenvalue of an element $X$ of the CQ*-algebra: 
\begin{defn} \label{def41}
Let $X\in \A$. $\overline{\omega}$ is said to be an eigenstate of $X$ with eigenvalue $\alpha$ if
$\overline{\omega}(AX)=\alpha \overline{\omega}(A)$ for all $A\in \Ao$. The set of all eigenvalues of $X$ is denoted by $Eig(X)$.
\end{defn}
\begin{lemma} \label{lemma42}
Let $X\in \A$ and $\omega\in E(\Ao)$. Then the following statements are equivalent.
\begin{enumerate}[(i)]
\item $\overline{\omega}$ is an eigenstate of $X$ with eigenvalue $\alpha$.
\item $[ \pi_{\overline{\omega}}(X)\lambda_\omega(I)]=\alpha \lambda_\omega(I)$.
\end{enumerate}
\end{lemma}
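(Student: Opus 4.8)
The plan is to reduce both conditions to a single identity for the sesquilinear pairing of the unbounded vector $\pi_{\overline{\omega}}(X)\lambda_\omega(I)$ against the dense domain $\lambda_\omega(\Ao)$. The starting point is the defining formula for $\pi_{\overline{\omega}}$ read off from the proof of Proposition \ref{prop32}: for $B,C\in\Ao$ and a sequence $\{A_n\}\subset\Ao$ with $\|\cdot\|$-$\lim_{n}A_n=X$, I would record
\[
<\pi_{\overline{\omega}}(X)\lambda_\omega(B),\lambda_\omega(C)> = \lim_{n\to\infty}(\pi_\omega(A_n)\lambda_\omega(B)\,|\,\lambda_\omega(C)) = \lim_{n\to\infty}\omega(C^*A_nB) = \overline{\omega}(C^*XB),
\]
where the last equality uses that $C^*XB=\|\cdot\|$-$\lim_n C^*A_nB$ (by the module actions and associativity) together with the continuity of $\overline{\omega}$. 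Specializing to $B=I$ gives the key relation $<\pi_{\overline{\omega}}(X)\lambda_\omega(I),\lambda_\omega(C)>=\overline{\omega}(C^*X)$ for all $C\in\Ao$.

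Next I would translate condition (ii) into the same language. By the definition of a bounded unbounded vector, the assertion $[\pi_{\overline{\omega}}(X)\lambda_\omega(I)]=\alpha\lambda_\omega(I)$ means exactly that $\pi_{\overline{\omega}}(X)\lambda_\omega(I)$ extends to the continuous conjugate-linear functional represented by $\alpha\lambda_\omega(I)$, i.e.
\[
<\pi_{\overline{\omega}}(X)\lambda_\omega(I),\lambda_\omega(C)> = (\alpha\lambda_\omega(I)\,|\,\lambda_\omega(C)) = \alpha\,\omega(C^*), \qquad \forall\,C\in\Ao,
\]
using $(\lambda_\omega(I)\,|\,\lambda_\omega(C))=\omega(C^*)$. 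Comparing this with the key relation above, (ii) is equivalent to $\overline{\omega}(C^*X)=\alpha\,\omega(C^*)$ for every $C\in\Ao$. Since the involution is a bijection of $\Ao$ and $\omega(A)=\overline{\omega}(A)$ for $A\in\Ao$, writing $A=C^*$ turns this into $\overline{\omega}(AX)=\alpha\,\overline{\omega}(A)$ for all $A\in\Ao$, which is precisely condition (i). This yields (ii)$\Rightarrow$(i) at once from the two displayed identities.

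For the implication (i)$\Rightarrow$(ii) I must additionally verify that $\pi_{\overline{\omega}}(X)\lambda_\omega(I)$ is genuinely a \emph{bounded} unbounded vector, not merely that its values agree with those of $\alpha\lambda_\omega(I)$ on $\lambda_\omega(\Ao)$; this is the one place where care is needed. Assuming (i), the key relation shows that $<\pi_{\overline{\omega}}(X)\lambda_\omega(I),\xi>=(\alpha\lambda_\omega(I)\,|\,\xi)$ for every $\xi$ in the subspace $\lambda_\omega(\Ao)$, which is dense in $\Hil_\omega$ by the GNS construction. Hence $\pi_{\overline{\omega}}(X)\lambda_\omega(I)$ coincides on a dense domain with the everywhere-defined continuous conjugate-linear functional $\xi\mapsto(\alpha\lambda_\omega(I)\,|\,\xi)$, so by the definition of boundedness it is bounded and its representing vector satisfies $[\pi_{\overline{\omega}}(X)\lambda_\omega(I)]=\alpha\lambda_\omega(I)$. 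I expect the only real obstacle to be the bookkeeping of conventions: orienting the pairing $<\cdot,\cdot>$, the inner product $(\cdot\,|\,\cdot)$, and the bracket $[\cdot]$ consistently with respect to (conjugate) linearity, after which the argument is routine.
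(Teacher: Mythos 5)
Your proof is correct and follows essentially the same route as the paper's: both rest on the identity $<\pi_{\overline{\omega}}(X)\lambda_\omega(I),\lambda_\omega(A^*)>=\overline{\omega}(AX)$, compare it with $(\alpha\lambda_\omega(I)\,|\,\lambda_\omega(A^*))$, and use density of $\lambda_\omega(\Ao)$ in $\Hil_\omega$ to conclude that the unbounded vector $\pi_{\overline{\omega}}(X)\lambda_\omega(I)$ is bounded with $[\pi_{\overline{\omega}}(X)\lambda_\omega(I)]=\alpha\lambda_\omega(I)$. The only cosmetic differences are that you derive the key pairing identity directly from the limit formula defining $\pi_{\overline{\omega}}$ in Proposition \ref{prop32} (rather than via the adjoint manipulations the paper uses), and you spell out the direction (ii)$\Rightarrow$(i) that the paper dismisses as trivial.
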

\begin{proof}
(i)$\Rightarrow$(ii)
For any $A\in \Ao$ we have
\begin{eqnarray}
\overline{\omega}(AX)=\alpha \omega(A)  
&=& <\pi_{\overline{\omega}}(A)\lambda_\omega(X),\lambda_\omega(I)>   \nonumber \\
&=& <\lambda_\omega(X),\pi(A^\dagger)\lambda_\omega(I)>\nonumber  \\
&=& <\lambda_\omega(X),\lambda_\omega(A^\dagger)>  \nonumber\\
&=& <\pi_{\overline{\omega}}(X)\lambda_\omega(I),\lambda_\omega(A^\dagger)>  ,\nonumber
\end{eqnarray}
and
\begin{eqnarray}
\overline{\omega}(AX)
&=& \alpha \omega(A) \nonumber \\
&=& (\alpha \lambda_\omega(I)|\pi_\omega(A^\dagger)\lambda_\omega(I))\nonumber \\
&=& (\alpha\lambda_\omega(I)|\lambda_\omega(A^\dagger)),\nonumber
\end{eqnarray}
which implies that $\pi_{\overline{\omega}}(X)\lambda_\omega(I)$ is a bounded vector{\color{blue}\footnote{This is the reason why we are using $[ \pi_{\overline{\omega}}(X)\lambda_\omega(I)]$ rather than $ \pi_{\overline{\omega}}(X)\lambda_\omega(I)$.}} in $\Hil_\omega$ and $[\pi_{\overline{\omega}}(X)\lambda_\omega(I)]=\alpha \lambda_\omega(I)$.\\
(ii)$\Rightarrow$(i) This is trivial. 
This completes the proof.
\end{proof}

We will say that $X\in \A$ has a left- (resp., right-)  inverse in $\Ao$, if there exists $B\in \Ao$ such that $BX=I$ (resp., $XB=I$).

Next we define the spectra of an element of $\A$ as follows:
\begin{defn}\label{def43}
Let $X\in \A$. We put
\begin{eqnarray}
\sigma_{\Ao}^L(X)
&:=& \{ \alpha \in\mathbb{C}; \; (X-\alpha I)\; \mbox{ has no left inverse in $\Ao$}\} ,\nonumber \\
\sigma_{\Ao}^R(X)
&:=& \{ \alpha\in\mathbb{C}; \; (X-\alpha I)\;\mbox{ has no right inverse in $\Ao$}\} ,\nonumber \\
\sigma_{\Ao}(X)
&:=& \sigma_{\Ao}^L(X) \cup \sigma_{\Ao}^R(X).\nonumber 
\end{eqnarray}
{The set $\sigma_{\Ao}(X)$ (resp. $\sigma_{\Ao}^L(X)$, $\sigma_{\Ao}^R(X)$) is called the (resp. {\em left, right}) {\em spectrum} of $X$.} 
\end{defn}
It is clear that the maps
\begin{eqnarray}
\alpha \in \sigma_{\Ao}^L(X) \mapsto \overline{\alpha} \in \sigma_{\Ao}^R(X^\ast) \;\;{\rm and}\;\;
\alpha \in \sigma_{\Ao}(X) \mapsto \overline{\alpha} \in \sigma_{\Ao}(X^\ast)\nonumber
\end{eqnarray}
are bijections.\\
\par
\begin{xrem}
For $X\in \A$ we can not define $\sigma_\A^L(X)$, $\sigma_\A^R(X)$ and $\sigma_\A(X)$ because $Y(X-\alpha I)$, $(X-\alpha I)Y$ are not defined for generic $Y\in \A$.
\end{xrem}
\begin{lemma} \label{lemma44}
Let $X\in \A$. Then we have the following
\begin{eqnarray}
Eig(X) \subset \sigma_{\Ao}^L(X) \subset \sigma_{\Ao}(X). \nonumber
\end{eqnarray}
\end{lemma}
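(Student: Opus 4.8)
The second inclusion is immediate and requires no work: by Definition \ref{def43}, $\sigma_{\Ao}(X)=\sigma_{\Ao}^L(X)\cup\sigma_{\Ao}^R(X)$, so $\sigma_{\Ao}^L(X)\subset\sigma_{\Ao}(X)$ holds by construction. Hence the entire content lies in the first inclusion $Eig(X)\subset\sigma_{\Ao}^L(X)$, which I would establish directly: starting from $\alpha\in Eig(X)$, I would show that $X-\alpha I$ cannot have a left inverse in $\Ao$.

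So fix $\alpha\in Eig(X)$ and pick a witnessing eigenstate, i.e. some $\omega\in E(\Ao)$ whose extension $\overline{\omega}$ satisfies $\overline{\omega}(AX)=\alpha\,\overline{\omega}(A)$ for all $A\in\Ao$ (Definition \ref{def41}). The key preliminary observation is that $\overline{\omega}$ annihilates every left multiple of $X-\alpha I$: using linearity of $\overline{\omega}$ together with the $\Ao$-module relation $A(X-\alpha I)=AX-\alpha A$ in the quasi *-algebra $(\A,\Ao)$, I get
\begin{equation}
\overline{\omega}(A(X-\alpha I))=\overline{\omega}(AX)-\alpha\,\overline{\omega}(A)=0,\qquad A\in\Ao. \nonumber
\end{equation}

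To finish, I would argue by contradiction. Suppose $X-\alpha I$ did admit a left inverse $B\in\Ao$, so that $B(X-\alpha I)=I$. Applying $\overline{\omega}$ and using the displayed identity with $A=B$ gives $\overline{\omega}(I)=\overline{\omega}(B(X-\alpha I))=0$. But $\overline{\omega}$ is a state of $\A$, whence $\overline{\omega}(I)=\omega(I)=1$, a contradiction. Therefore $X-\alpha I$ has no left inverse in $\Ao$, i.e. $\alpha\in\sigma_{\Ao}^L(X)$, as required.

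I do not expect a genuine obstacle here; the proof is formal and the only care needed is in the bookkeeping. I must check that the identity $A(X-\alpha I)=AX-\alpha A$ is legitimate for $A\in\Ao$ and $X\in\A$ (it is, being the distributive and unital part of the bimodule structure of $(\A,\Ao)$), and — this is the only real load-bearing point — that a witnessing eigenstate is a genuine \emph{normalized} state, since the contradiction rests entirely on $\overline{\omega}(I)=1\neq 0$. Were one to work with unnormalized eigenstates, the same argument would go through as long as $\overline{\omega}(I)\neq 0$, but in the present setting the state assumption makes this automatic.
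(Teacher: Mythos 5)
Your proof is correct and follows essentially the same route as the paper: both argue by contradiction, applying the eigenstate identity $\overline{\omega}(AX)=\alpha\,\overline{\omega}(A)$ to a hypothetical left inverse $B$ with $B(X-\alpha I)=I$ and deriving $0=1$ (the paper writes this as $\alpha\omega(B)=\alpha\omega(B)+1$, you as $\overline{\omega}(I)=0$ versus $\overline{\omega}(I)=1$ — the same computation reorganized). Your explicit remarks on the bimodule identity and on normalization are sound but add nothing beyond what the paper's argument implicitly uses.
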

\begin{proof}
Take an arbitrary $\alpha \in Eig(X)$. Then there exists a $\|\cdot\|$-continuous state $\omega$ of $\Ao$ satisfying
\begin{equation}
\overline{\omega}(AX)=\alpha \omega(A) \;\;{\rm for \; all} \; A\in \Ao. \label{4.1}
\end{equation}
Now we assume $\alpha \not \in \sigma_{\Ao}^L(X)$. Then there exists a $B\in \Ao$ such that
\begin{eqnarray}
B(X-\alpha I)
= I. \nonumber 
\end{eqnarray}
By \eqref{4.1} we have $\alpha \omega(B)=\overline{\omega}(BX)=\alpha \omega(B)+1$, so $0=1$. This is a contradiction.
Thus $\alpha \in \sigma_{\Ao}^L(X)$. {The inclusion $\sigma_{\Ao}^L(X) \subset \sigma_{\Ao}(X)$ is obvious.}
\end{proof}
\subsection{*-Semisimple CQ*-algebras: a hilbertian approach}
{In the previous sections we considered positive linear functionals on a CQ*-algebra as continuous linear functionals $\omega$ that are positive in $\Ao$. The continuity allows to extend such a functional to the whole space and perform a GNS-like construction.
There are however possible alternative procedures that can be exploited, all closely linked to a variant of the GNS construction which is the main tool for this analysis. One of them is the notion of representable linear functional \cite[Definition 2.4.6]{10} or the notion of invariant positive sesquilinear (ips) form.
\begin{defn}
	Let $(\A,\Ao)$ be a quasi *-algebra. A linear functional $\omega$ on $\A$ is called {\em representable} if
	\begin{itemize}
			\item[(L.1)] $\omega(a^*a)\geq 0, \;, \forall a \in \Ao;$
			\item[(L.2)] $\omega(b^*x^*a)= \overline{\omega(a^*xb)}, \; \forall x\in \A, a,b \in \Ao$;
			\item[(L.3)] $\forall x \in \A$, there exists $\gamma_x>0$, such that 
			$$|\omega (x^*a)| \leq \gamma_x \omega(a^*a)^{1/2}, \quad {\forall a\in\Ao}.$$
		\end{itemize}
	\end{defn}

Similarly to the previous sections, every representable linear functional defines a GNS-triple $(\pi_\omega, \lambda_\omega, \D_\omega)$ but now $\lambda_\omega$  takes its values in a dense domain $\D_\omega$ of a Hilbert space $\Hil_\omega$ and $\pi_\omega$ maps $\A$ into the partial *-algebra of operators $ {\mc L}^\dagger(\D_\omega, \Hil_\omega)$. If the quasi *-algebra has a unit, then this representation is cyclic and unique, up to unitary transformations, (see \cite{10}).

The relationship between continuity and representability of a linear functional, discussed in \cite{11} and \cite[Section 3.2]{10}, is still an open problem. For this reason we will suppose that $\omega$ is a {\em continuous} representable linear functional. 
Starting from $\omega$, one can construct a sesquilinear form $\vp_\omega$ by
$$ \vp_\omega(X,Y)= (\pi_\omega(X) \lambda_\omega(I)| \pi_\omega(Y) \lambda_\omega(I) ), \quad X,Y \in \A.$$
It turns out that $\varphi_\omega$ is bounded \cite[Proposition 3.2.2]{10}; that is, $\vp_\omega$ is a member of the set $\PA$ that we are going to define.

}

\begin{defn}\label{defn_ssa}
Let us denote by $\PA$ the family of sesquilinear forms $\vp$ on $\A \times\A$ such that
\begin{itemize}
	\item[(i)] $\vp(X,X)\geq 0, \; \forall X \in \A$;
	\item[(ii)] $\vp(XA,B)=\vp(A, X^*B), \; \forall X\in \A, \forall A,B \in \Ao$;
	\item[(iii)] $\exists$ $\gamma>0$ such that $|\vp(X,Y)|\leq \gamma \|X\|\|Y\|, \; \forall X,Y\in \A$. 
\end{itemize}
By $\SSA$ we denote the subset of elements of $\PA$ for which $\gamma \leq 1$.
\end{defn}
\begin{remark}\label{rem_psf}
We recall that if $\vp$ is a positive sesquilinear form, then $\vp$ satisfies
\begin{itemize}
	\item $\vp(X,Y)= \overline{\vp(Y,X)}, \quad \forall X,Y \in \A$;
	\item $|\vp(X,Y)|^2 \leq \vp(X,X) \vp(Y,Y), \quad \forall X,Y \in \A$.
\end{itemize}
\end{remark}
On the other hand, it is easily shown that to every element $\vp \in \PA$, there corresponds a continuous representable linear functional $\omega_\vp$. Then we go through with our analysis using  sesquilinear forms.


 To begin with, we remind that every $\vp\in \PA$ allows a GNS-construction as in \cite[Theorem 2.4.1]{10}, that is, there exist a Hilbert space $\Hil_\vp$, a dense subspace $\D_\vp,$ a linear map $\lambda_\vp:\Ao \to \D_\vp$ and a *-representation $\pi_\vp$ of $(\A,\Ao)$ such that, for all $X,Y\in \A$ and $A,B\in \Ao$,
  \begin{equation}\label{eq_GNS} \vp(XA,YB)= (\pi_\vp(X)\lambda_\vp(A)|\pi_\vp(X)\lambda_\vp(A)).\end{equation}
The triplet $(\pi_\vp, \lambda_\vp, \D_\vp)$ is called the GNS construction for $\vp$. 
Since $\vp\in \PA$ is bounded, $\lambda_\vp(\Ao)$ is dense in $\Hil_\vp$; thus, we can suppose that $\D_\vp=\lambda_\vp(\Ao)$.

We notice that if $(\A,\Ao)$ has a unit $I$, then $\pi_\vp(I)=I_\vp$ the identity operator of $\D_\vp$.

\medskip
The CQ*-algebra $(\A[\|\cdot\|], \Ao)$ is called *-{\em semisimple} if for every $X\neq 0$ there exists $\vp \in \SSA$ such that $\vp(X,X)>0$.

For instance,  if $p\geq 2$, both  the CQ*-algebras $(L^p([0,1]), L^\infty([0,1])$ and  $(L^p(\vp), L^\infty(\vp))$ considered in the examples \ref{esempio ellepi} and \ref{esempio semi}, may be regarded as  *-semisimple
CQ*-algebras.

\begin{defn} Let  $(\A[\|\cdot\|], \Ao)$ be a *-semisimple CQ*-algebra.
	We say that $X\in \A$ has a generalized left inverse if there exists $Y\in \A$ such that
	$$\vp(XA,Y^*B)= \vp(A,B), \quad \forall \vp \in \SSA,\, \forall A,B \in \Ao.$$ 
	Analogously, we say that $X\in \A$ has a generalized right inverse if there exists $Y'\in \A$ such that
	$$\vp(Y'A,X^*B)= \vp(A,B), \quad \forall \vp \in \SSA,\, \forall A,B \in \Ao.$$
	An element $Y$ that is at the same time left- and right generalized inverse of $X$, is called, simply, a generalized inverse of $X$.
\end{defn}
\begin{remark}
It is worth stressing that the generalized inverses need not be unique. 
\end{remark}

In a *-semisimple CQ*-algebra, one can define a weak multiplication by saying that an element $Z\in \A$ is the weak product of $X,Y \in \A$, and it is denoted by $Z=X\mult Y$, if 
$$ \vp(XA,  Y^*B)= \vp(ZA,B) , \quad \forall \vp \in \SSA,\, \forall A,B \in \Ao.$$
Then, for instance, if $(\A,\Ao)$ has a unit $I$,  $Y$ is a generalized right inverse of $X$ if $X\mult Y=I$.

{\begin{defn} \label{defn_gen_eigen}Let $(\A[\|\cdot\|], \Ao)$ be a *-semisimple CQ*-algebra.
A complex number $\alpha$ is called a {\em generalized eigenvalue} of $X\in \A$, if there exist a nonzero $\vp \in \PA$ (called a {\em generalized eigenvector} of $X$) and $A\in\Ao$ such that 
\begin{equation}\label{eqn_eigen}
\mbox{$\varphi(A,A)> 0$ and $\varphi(XA-\alpha A,B)=0, \quad \forall B\in\Ao$.}
\end{equation}

\end{defn}

\begin{proposition}\label{prop_411}
	Let $(\A[\|\cdot\|], \Ao)$ be a *-semisimple CQ*-algebra. The following statements are equivalent.
\begin{itemize}
	\item[(i)] The complex number $\alpha$ is a generalized eigenvalue of $X\in \A$. 
	\item[(ii)] There exists a nonzero $\vp \in \PA$ {and $A\in \Ao$ with $\vp(A,A)>0$} such that
	$$ \vp(XA-\alpha A, XA-\alpha A)=0.$$
		\item[(iii)] There exists a nonzero $\vp\in \PA$ such that ${\rm Ker}(\pi_\varphi(X)-\alpha I_\vp)\neq \{0\}$, where $\pi_\vp$ is the GNS representation constructed from $\vp$.
\end{itemize} 
\end{proposition}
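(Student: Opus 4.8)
The plan is to prove the two equivalences (i) $\Leftrightarrow$ (ii) and (ii) $\Leftrightarrow$ (iii), using throughout the \emph{same} form $\vp\in\PA$ and the \emph{same} $A\in\Ao$ inside each block, so that the conditions ``$\vp\neq 0$'' and ``$\vp(A,A)>0$'' transfer for free. Nothing but the defining properties of $\PA$ (Definition \ref{defn_ssa}), the Cauchy--Schwarz inequality of Remark \ref{rem_psf}, the density of $\Ao$ in $\A$, and the GNS identity \eqref{eq_GNS} will be needed; in particular $\ast$-semisimplicity of $\A$ plays no role in the equivalence itself and only fixes the ambient framework.

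For (i) $\Rightarrow$ (ii) I would combine density with boundedness. Since $XA-\alpha A\in\A$ and $\A$ is the $\|\cdot\|$-completion of $\Ao$, I choose $\{B_n\}\subset\Ao$ with $\|B_n-(XA-\alpha A)\|\to 0$; property (iii) of Definition \ref{defn_ssa} then gives
$$|\vp(XA-\alpha A,\,B_n)-\vp(XA-\alpha A,\,XA-\alpha A)|\leq \gamma\,\|XA-\alpha A\|\,\|B_n-(XA-\alpha A)\|\longrightarrow 0 .$$
As each $\vp(XA-\alpha A,B_n)$ vanishes by (i), the limit $\vp(XA-\alpha A,XA-\alpha A)$ is $0$, which is (ii). The reverse (ii) $\Rightarrow$ (i) is immediate from Remark \ref{rem_psf}, since $|\vp(XA-\alpha A,B)|^2\leq \vp(XA-\alpha A,XA-\alpha A)\,\vp(B,B)=0$ for every $B\in\Ao$.

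The heart of the proof is (ii) $\Leftrightarrow$ (iii), which reduces to a single translation into Hilbert-space language. Writing $XA-\alpha A=(X-\alpha I)A$ and using $\pi_\vp(I)=I_\vp$ together with the linearity of $\pi_\vp$, the GNS identity \eqref{eq_GNS} (taken at $X-\alpha I$ in both slots and $A=B$) gives
$$\vp(XA-\alpha A,\,XA-\alpha A)=\big\|\big(\pi_\vp(X)-\alpha I_\vp\big)\lambda_\vp(A)\big\|^2 ,$$
while the same identity at $I$ gives $\vp(A,A)=\|\lambda_\vp(A)\|^2$. Hence the two conditions of (ii) say precisely that $\lambda_\vp(A)$ is a nonzero vector annihilated by $\pi_\vp(X)-\alpha I_\vp$, so that (ii) $\Rightarrow$ (iii) follows at once.

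The step I expect to demand the most care is the converse (iii) $\Rightarrow$ (ii), where one must check that a nonzero kernel element has the right form. Because $\pi_\vp$ takes values in $\LL^\dagger(\D_\vp,\Hil_\vp)$ with $\D_\vp=\lambda_\vp(\Ao)$ (as recorded just after \eqref{eq_GNS}), the operator $\pi_\vp(X)-\alpha I_\vp$ is defined exactly on $\lambda_\vp(\Ao)$, so any nonzero $\xi\in{\rm Ker}(\pi_\vp(X)-\alpha I_\vp)$ is automatically $\xi=\lambda_\vp(A)$ for some $A\in\Ao$. Reading the two displayed identities backwards then yields $\vp(A,A)=\|\lambda_\vp(A)\|^2=\|\xi\|^2>0$ and $\vp(XA-\alpha A,XA-\alpha A)=\|(\pi_\vp(X)-\alpha I_\vp)\xi\|^2=0$, i.e. (ii). The only point worth verifying carefully is this identification of the domain, since if $\pi_\vp(X)$ were only densely defined with a strictly larger closure its kernel could a priori contain vectors outside $\lambda_\vp(\Ao)$.
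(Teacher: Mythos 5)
Your proof is correct and uses essentially the same tools as the paper's: density of $\Ao$ in $\A$ plus the boundedness of $\vp$ for (i)$\Rightarrow$(ii), and the GNS identity $\vp(XA-\alpha A,\,XA-\alpha A)=\|(\pi_\vp(X)-\alpha I_\vp)\lambda_\vp(A)\|^2$ together with $\vp(A,A)=\|\lambda_\vp(A)\|^2$ for the passage to (iii). The only difference is organizational: you establish the two equivalences (i)$\Leftrightarrow$(ii) and (ii)$\Leftrightarrow$(iii) separately (inserting a Cauchy--Schwarz argument for (ii)$\Rightarrow$(i)), whereas the paper closes the cycle (i)$\Rightarrow$(ii)$\Rightarrow$(iii)$\Rightarrow$(i) with a single GNS pairing computation, and your domain observation that ${\rm Ker}(\pi_\vp(X)-\alpha I_\vp)\subset \lambda_\vp(\Ao)$ --- because $\pi_\vp(X)-\alpha I_\vp$ is defined only on $\D_\vp=\lambda_\vp(\Ao)$ --- is exactly how the paper reads condition (iii) as well.
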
}
\begin{proof}
(i)$\Rightarrow$(ii): Suppose that $\alpha$  is a generalized eigenvalue of $X\in\A$. Then, there exist a nonzero $\vp \in \PA$ and $A\in\Ao$ such that \eqref{eqn_eigen} holds.
Let now  $\{ B_n\}\subset\Ao$ be a sequence such that $\| XA-\alpha A-B_n\|\to 0$.
Then we have
$$ \vp(XA-\alpha A, XA-\alpha A)=\lim_{n\rightarrow\infty}\varphi(XA-\alpha A,B_n)=0.$$

\noindent (ii)$\Rightarrow$(iii): Let $(\pi_\vp, \lambda_\vp, \D_\vp)$ be the GNS construction for $\vp$. Then
$$ \vp(XA-\alpha A, XA-\alpha A) =\|(\pi_\vp(X)-\alpha I_\vp))\lambda_\vp(A)\|^2=0.$$
Hence $(\pi_\vp(X)-\alpha I_\vp)\lambda_\vp(A)=0$ and, since $\|\lambda_\vp(A)\|^2=\vp(A,A)>0$, we conclude that  ${\rm Ker}(\pi_\varphi(X)-\alpha I_\vp)\neq \{0\}$.

\noindent (iii)$\Rightarrow$(i):
Assume that $\alpha \in \mathbb{C}$ is an eigenvalue of $\pi_\vp(X)$, for some $\vp\in \PA$; then there exists $A\in \Ao$ such that $\lambda_\vp(A)\neq 0$ such that $(\pi_\vp(X)-\alpha I_\vp)\lambda_\vp(A)=0$. Then, for every $B\in \Ao$,
$$ \varphi(XA-\alpha A,B)= <(\pi_\vp(X)-\alpha I_\vp)\lambda_\vp(A),\lambda_\vp(B)>=0.$$
This completes the proof.
\end{proof}
\smallskip

\begin{proposition} Let   $(\A[\|\cdot\|], \Ao)$ be  a *-semisimple CQ*-algebra with unit $I$.
Suppose that $\alpha\in {\mathbb C}$ is a generalized eigenvalue of $X$. Then, $X-\alpha I$ has no generalized left inverse.
\end{proposition}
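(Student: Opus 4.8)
The plan is to argue by contradiction, exploiting that a generalized eigenvector is itself a sesquilinear form against which the defining identity of a generalized left inverse can be tested. First I would fix, via Definition~\ref{defn_gen_eigen}, a nonzero $\vp_0\in\PA$ and an $A_0\in\Ao$ with $\vp_0(A_0,A_0)>0$ and $\vp_0(XA_0-\alpha A_0,B)=0$ for all $B\in\Ao$. Since both conditions are homogeneous in $\vp_0$, I would rescale: if $\vp_0$ obeys condition (iii) of Definition~\ref{defn_ssa} with constant $\gamma$, then $\gamma^{-1}\vp_0$ still satisfies (i)--(ii), now has bound $1$, and hence lies in $\SSA$, while $\gamma^{-1}\vp_0(A_0,A_0)>0$ and $\gamma^{-1}\vp_0(XA_0-\alpha A_0,B)=0$ are preserved. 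This normalization is not cosmetic: the definition of generalized left inverse quantifies only over forms in $\SSA$, so I need the eigenvector to live there.

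Next I would assume, for contradiction, that $X-\alpha I$ admits a generalized left inverse $Y\in\A$, that is,
\[
\vp\big((X-\alpha I)A,\,Y^*B\big)=\vp(A,B),\qquad\forall\,\vp\in\SSA,\ \forall\,A,B\in\Ao .
\]
Specializing to $\vp=\vp_0$ and $A=A_0$, and using $(X-\alpha I)A_0=XA_0-\alpha A_0$, this reads $\vp_0(XA_0-\alpha A_0,\,Y^*B)=\vp_0(A_0,B)$ for every $B\in\Ao$. The key step is to show that the left-hand side vanishes. The eigenvector relation only guarantees that $\vp_0(XA_0-\alpha A_0,\,\cdot\,)$ annihilates $\Ao$, whereas $Y^*B$ sits in $\A$. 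To bridge the gap I would invoke that $\Ao$ is $\|\cdot\|$-dense in $\A$ and that $\vp_0$ is jointly $\|\cdot\|$-bounded: picking $\{D_n\}\subset\Ao$ with $\|D_n-Y^*B\|\to0$, condition (iii) gives $\vp_0(XA_0-\alpha A_0,\,Y^*B)=\lim_n\vp_0(XA_0-\alpha A_0,\,D_n)=0$, every term being zero. Alternatively, one passes to the GNS construction and uses Proposition~\ref{prop_411}: there $(\pi_{\vp_0}(X)-\alpha I_{\vp_0})\lambda_{\vp_0}(A_0)=0$, so the inner product representing the left-hand side is automatically $0$.

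Combining the two relations yields $\vp_0(A_0,B)=0$ for all $B\in\Ao$; choosing $B=A_0$ gives $\vp_0(A_0,A_0)=0$, contradicting $\vp_0(A_0,A_0)>0$. Hence $X-\alpha I$ has no generalized left inverse. I expect the one genuinely delicate point to be the evaluation in the key step, namely establishing $\vp_0(XA_0-\alpha A_0,\,Y^*B)=0$ despite $Y^*B\notin\Ao$; this is exactly where joint continuity of the forms in $\PA$ together with density of $\Ao$ in $\A$ (or, equivalently, the GNS picture) are indispensable, while by comparison the rescaling into $\SSA$ is routine bookkeeping.
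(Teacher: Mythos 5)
Your proposal is correct and follows essentially the same route as the paper: both test the defining identity of a generalized left inverse against the eigenvector form $\vp_0$ and the element $A_0$, use $\|\cdot\|$-density of $\Ao$ in $\A$ together with the boundedness condition (iii) of Definition~\ref{defn_ssa} to conclude $\vp_0\bigl((X-\alpha I)A_0, Y^*B\bigr)=0$, and then obtain the contradiction $0=\vp_0(A_0,A_0)>0$. The only (welcome) difference is that you make explicit the rescaling of $\vp_0\in\PA$ into $\SSA$, a bookkeeping point the paper's proof passes over in silence.
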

\begin{proof} If $\alpha$ is a generalized eigenvalue of $X$, there exist  $\vp$ and $A\in \Ao$, with $\vp(A,A)>0$ such that $\vp((X-\alpha I)A,B)=0$ for every $B\in \Ao$. Let $Y\in \A$, $\displaystyle Y=\lim_{n\to\infty}B_n$, $B_n\in \Ao$. Then
	$$ \vp((X-\alpha I)A, Y^* C)= \lim_{n\to\infty}\vp((X-\alpha I)A, B_n^* C)=0.$$
	Hence, $X-\alpha I$  has no generalized left inverse.\\
\end{proof}

Let $\D$ be a dense domain in Hilbert space and $K\in {\mc L}^\dag(\D)$. We will say that $K$ is {\em formally normal}  if $K^\dag K=K K^\dag$ or, equivalently if $\|K\xi\|=\|K^\dag\xi\|$ for every $\xi \in \D$.

An element $X\in \A$ is called {\em normal} if 
$$ \vp(XA,XA)= \vp(X^*A,X^*A), \quad\forall \vp \in \PA, \, \forall A\in \Ao.$$ It is clear that $X$ is normal if and only if  $X^*$ is  normal.
\begin{proposition}
Let $X\in \A$. The following statements are equivalent.
\begin{itemize}
	\item[(i)] $X$ is normal.
	\item[(ii)] $\pi_\vp(X)$ is a {formally} normal operator on $\D_\vp =\lambda_\vp(\Ao)$, for every $\vp \in \PA$.
\end{itemize}
\end{proposition}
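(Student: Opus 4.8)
The plan is to translate both conditions into statements about norms of vectors in the GNS Hilbert spaces $\Hil_\vp$, where they become manifestly identical. The bridge is the defining relation \eqref{eq_GNS} of the GNS construction, together with the fact that $\pi_\vp$ is a $*$-representation (so that $\pi_\vp(X^*)=\pi_\vp(X)^\dag$ by Definition \ref{def31}) and that the domain $\D_\vp$ coincides with $\lambda_\vp(\Ao)$, which holds because every $\vp\in\PA$ is bounded. No genuine analytic difficulty is involved: the statement is essentially a dictionary entry matching the form-theoretic notion of normality with the operator-theoretic one.

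First I would fix $\vp\in\PA$ with GNS triple $(\pi_\vp,\lambda_\vp,\D_\vp)$ and record, for every $A\in\Ao$, the two diagonal instances of \eqref{eq_GNS}
\begin{align}
\vp(XA,XA) &= (\pi_\vp(X)\lambda_\vp(A)|\pi_\vp(X)\lambda_\vp(A)) = \|\pi_\vp(X)\lambda_\vp(A)\|^2, \nonumber \\
\vp(X^*A,X^*A) &= (\pi_\vp(X^*)\lambda_\vp(A)|\pi_\vp(X^*)\lambda_\vp(A)) = \|\pi_\vp(X^*)\lambda_\vp(A)\|^2. \nonumber
\end{align}
Since $\pi_\vp(X)^\dag=\pi_\vp(X^*)$, the criterion for $\pi_\vp(X)$ to be formally normal, namely $\|\pi_\vp(X)\xi\|=\|\pi_\vp(X)^\dag\xi\|$ for all $\xi\in\D_\vp$, can be tested on the generators $\xi=\lambda_\vp(A)$ because $\D_\vp=\lambda_\vp(\Ao)$; after squaring it reads exactly
$$ \vp(XA,XA)=\vp(X^*A,X^*A), \quad \forall A\in\Ao. $$

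Running the quantifier over all $\vp\in\PA$ then identifies (ii), the formal normality of $\pi_\vp(X)$ for every $\vp$, with condition (i), which is the same family of equalities; the two implications (i)$\Rightarrow$(ii) and (ii)$\Rightarrow$(i) both fall out of this single identity read in the two directions. The only points requiring care are bookkeeping ones: that $\D_\vp=\lambda_\vp(\Ao)$, so that verifying formal normality on all of $\D_\vp$ is the same as verifying it on the vectors $\lambda_\vp(A)$; and that the clause ``for every $\vp\in\PA$'' occurs on both sides, so the equivalence is genuinely over the whole family $\PA$ and not for a fixed form. I therefore expect the main (and essentially only) obstacle to be stating these quantifier and domain matters precisely rather than any substantive estimate.
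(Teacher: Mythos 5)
Your proof is correct and is essentially the paper's own argument: the paper disposes of this proposition with exactly the observation that $\vp(XA,XA)=\|\pi_\vp(X)\lambda_\vp(A)\|^2$ (and likewise for $X^*$) via the GNS identity, combined with $\D_\vp=\lambda_\vp(\Ao)$ and $\pi_\vp(X^*)=\pi_\vp(X)^\dag$. Your more careful handling of the quantifier over $\vp$ and of the domain identification is just an explicit spelling-out of what the paper leaves implicit.
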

This is an immediate consequence of the equality
$$ \vp(XA,XA)=\|\pi_\vp(X) \lambda_\vp(A)\|^2, \quad \forall A\in \Ao$$
which holds for every $\vp \in \PA$.

\begin{remark}
Let $X\in \A$ be normal and $X=U+iV$, $U=U^*$, $V=V^*$ its cartesian decomposition. Then one easily proves the equality
$$ \vp(UA,VA)=\vp(VA,UA), \quad \forall \vp\in \PA, \, A\in Ao,$$
which can be read as a weak commutation of $U$ and $V$. Indeed, this equality implies that if $U\mult V$ is well defined then also $V\mult U$ is well defined and $U\mult V=V\mult U$.
\end{remark}

{\begin{proposition} Let $X\in \A$ be normal. Then, $\vp$ is a generalized eigenvector of $X$ with generalized eigenvalue $\alpha$ if and only if it is generalized eigenvector of $X^*$ with generalized eigenvalue $\overline{\alpha}$. 
\end{proposition}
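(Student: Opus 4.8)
The plan is to transport the whole question to the level of the GNS representation and to read it off the characterization of generalized eigenvectors in Proposition \ref{prop_411}(iii). First I would fix a single nonzero $\vp\in\PA$ and note that the proof of Proposition \ref{prop_411} never replaces $\vp$ by another form, so it actually establishes, for this \emph{fixed} $\vp$, the equivalence between ``$\vp$ is a generalized eigenvector of $X$ with generalized eigenvalue $\alpha$'' and ``${\rm Ker}(\pi_\vp(X)-\alpha I_\vp)\neq\{0\}$''. The same equivalence, applied to $X^*$ and $\overline{\alpha}$, reduces the proposition to the single operator-theoretic identity
$$ {\rm Ker}(\pi_\vp(X)-\alpha I_\vp)={\rm Ker}(\pi_\vp(X^*)-\overline{\alpha}\,I_\vp), $$
to be proved for every $\vp\in\PA$ under the normality hypothesis.

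The key input is the immediately preceding proposition: since $X$ is normal, $K:=\pi_\vp(X)$ is a formally normal element of $\mathcal{L}^\dagger(\D_\vp)$, i.e. $K^\dagger K=KK^\dagger$ on $\D_\vp=\lambda_\vp(\Ao)$. Because $\pi_\vp$ is a $*$-representation (Definition \ref{def31}) we have $K^\dagger=\pi_\vp(X)^\dagger=\pi_\vp(X^*)$, and $\pi_\vp(I)=I_\vp$ together with linearity gives $\pi_\vp(X)-\alpha I_\vp=\pi_\vp(X-\alpha I)\in\mathcal{L}^\dagger(\D_\vp)$. I would then check by a direct expansion that the affine shift $K-\alpha I_\vp$ is again formally normal: both $(K-\alpha I_\vp)^\dagger(K-\alpha I_\vp)$ and $(K-\alpha I_\vp)(K-\alpha I_\vp)^\dagger$ equal $K^\dagger K-\alpha K^\dagger-\overline{\alpha}K+|\alpha|^2 I_\vp$, the two coinciding precisely because $K^\dagger K=KK^\dagger$. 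Invoking the norm form of formal normality recalled just before the statement, this yields $\|(\pi_\vp(X)-\alpha I_\vp)\xi\|=\|(\pi_\vp(X^*)-\overline{\alpha}\,I_\vp)\xi\|$ for every $\xi\in\D_\vp$, whence the two operators have the same kernel. Feeding this back into Proposition \ref{prop_411}(iii) gives the claimed equivalence, and the full statement follows by running the argument for each $\vp$ witnessing a generalized eigenvector on either side.

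The main obstacle I anticipate is bookkeeping rather than conceptual: one must be sure that $K-\alpha I_\vp$ genuinely lies in $\mathcal{L}^\dagger(\D_\vp)$ (so that the equivalence ``formally normal $\Leftrightarrow$ $\|K\xi\|=\|K^\dagger\xi\|$'' is applicable), and that the per-$\vp$ reading of Proposition \ref{prop_411} is legitimate, which it is since that proof is carried out with a fixed form throughout. A slightly different route would avoid the representation altogether, showing directly from the Remark on weak commutation that $X$ normal implies $X-\alpha I$ normal and then applying condition (ii) of Proposition \ref{prop_411}; however, the formally normal operator computation above seems shorter and keeps the symmetry between $\alpha$ and $\overline{\alpha}$ transparent.
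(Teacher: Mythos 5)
Your argument is correct in substance, but it is not the paper's proof, and one step needs a repair. The paper stays entirely at the level of the form: using the (per-$\vp$) equivalence (i)$\Leftrightarrow$(ii) of Proposition \ref{prop_411}, it reduces the claim to the single identity
$$\vp(XA-\alpha A,\,XA-\alpha A)=\vp(X^*A-\overline{\alpha}A,\,X^*A-\overline{\alpha}A),$$
checked by expanding both sides and using normality together with property (ii) of Definition \ref{defn_ssa} (which gives $\vp(X^*A,A)=\vp(A,XA)$ and $\vp(A,X^*A)=\vp(XA,A)$). This is precisely the ``slightly different route'' you dismiss in your last sentence. What you do instead is pass to the GNS representation via (i)$\Leftrightarrow$(iii) and prove the kernel identity ${\rm Ker}(\pi_\vp(X)-\alpha I_\vp)={\rm Ker}(\pi_\vp(X^*)-\overline{\alpha}I_\vp)$. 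Since $\vp(XA-\alpha A,XA-\alpha A)=\|(\pi_\vp(X)-\alpha I_\vp)\lambda_\vp(A)\|^2$, the computational core is the same, so the two proofs are close relatives; yours yields the slightly stronger statement that the eigenspaces themselves coincide in each GNS representation, at the cost of routing through the representation. Your per-$\vp$ reading of Proposition \ref{prop_411} is legitimate, as you say, since its proof never changes $\vp$.

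The step that needs repair is your verification that the shift is formally normal by expanding the products $(K-\alpha I_\vp)^\dagger(K-\alpha I_\vp)$ and $(K-\alpha I_\vp)(K-\alpha I_\vp)^\dagger$. These compositions are not defined in general: $\pi_\vp$ maps $\A$ into $\mathcal{L}^\dagger(\D_\vp,\Hil_\vp)$, not into $\mathcal{L}^\dagger(\D_\vp)$, so for $X\in\A\setminus\Ao$ the vector $K\xi=\pi_\vp(X)\lambda_\vp(A)$ need not lie in $\D_\vp$ and $K^\dagger K$, $KK^\dagger$ have no meaning as operators on $\D_\vp$. (This is the very obstruction that forces the paper into unbounded vectors in Section~3, and it is why the paper's ``formally normal'' assertion about $\pi_\vp(X)$ can only be read in its norm form.) You flagged this as bookkeeping, but the product computation cannot be made to work; the correct fix is to run the same expansion with inner products instead: for $\xi\in\D_\vp$, expand $\|(K-\alpha I_\vp)\xi\|^2$ and $\|(K^\dagger-\overline{\alpha}I_\vp)\xi\|^2$ using only the adjoint relation $(K\xi|\eta)=(\xi|K^\dagger\eta)$ for $\xi,\eta\in\D_\vp$. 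The cross terms in the two expansions agree identically, and the quadratic terms agree because $\|K\xi\|=\|K^\dagger\xi\|$ by normality of $X$. With that substitution your proof is complete and needs nothing beyond what the paper provides.
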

\begin{proof}
This follows immediately from the definitions since, for every $\vp \in \PA$ and $A\in \Ao$, one has  $$ \vp(XA-\alpha A,XA-\alpha A)= \vp(X^*A-\overline{\alpha}A, X^*A-\overline{\alpha}A),$$
as can be checked by a direct calculation.
\end{proof}
}
{
	From (iii) of Proposition \ref{prop_411} it follows immediately that
	\begin{corollary} 	If $X=X^*$ then every generalized eigenvalue is real.
\end{corollary}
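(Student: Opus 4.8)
The plan is to reduce the statement to the elementary fact that a symmetric operator on a pre-Hilbert space has only real eigenvalues, transporting the problem from the abstract CQ*-algebra into a concrete GNS Hilbert space by means of part (iii) of Proposition \ref{prop_411}.

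First I would fix a generalized eigenvalue $\alpha$ of $X$. By the equivalence (i)$\Leftrightarrow$(iii) in Proposition \ref{prop_411}, there exists a nonzero $\vp\in\PA$ with ${\rm Ker}(\pi_\vp(X)-\alpha I_\vp)\neq\{0\}$; that is, there is a vector $\xi=\lambda_\vp(A)\in\D_\vp$, $\xi\neq 0$, satisfying $\pi_\vp(X)\xi=\alpha\xi$. This converts the abstract notion of generalized eigenvalue into an honest eigenvalue of the operator $\pi_\vp(X)$ acting on the domain $\D_\vp=\lambda_\vp(\Ao)$.

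Next I would record that $\pi_\vp(X)$ is symmetric on $\D_\vp$. Since $X=X^*$ and $\pi_\vp$ is a *-representation of $(\A,\Ao)$, we have $\pi_\vp(X)=\pi_\vp(X^*)=\pi_\vp(X)^\dagger$; unwinding the definition of the $\dagger$-operation in $\mc L^\dagger(\D_\vp,\Hil_\vp)$, this says precisely that
$$(\pi_\vp(X)\eta_1|\eta_2)=(\eta_1|\pi_\vp(X)\eta_2),\qquad\forall\,\eta_1,\eta_2\in\D_\vp.$$
Applying this with $\eta_1=\eta_2=\xi$ and using $\pi_\vp(X)\xi=\alpha\xi$ yields
$$\alpha\,(\xi|\xi)=(\pi_\vp(X)\xi|\xi)=(\xi|\pi_\vp(X)\xi)=\overline{\alpha}\,(\xi|\xi).$$
Because $(\xi|\xi)=\|\lambda_\vp(A)\|^2=\vp(A,A)>0$ (the eigenvector is genuinely nonzero), I may cancel it to conclude $\alpha=\overline{\alpha}$, i.e. $\alpha\in\R$.

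I do not anticipate any serious obstacle: the argument is the classical ``eigenvalues of a symmetric operator are real,'' and the whole content of the reduction is already packaged in Proposition \ref{prop_411}. The only points deserving care are the justification that $X=X^*$ forces $\pi_\vp(X)$ to be symmetric on $\D_\vp$ (a direct consequence of $\pi_\vp$ being a *-representation, combined with the definition of $\dagger$), and the observation that the eigenvector furnished by (iii) has strictly positive norm, so that it may be divided out. Both are immediate from the material already established, which is why the corollary follows ``immediately'' as announced.
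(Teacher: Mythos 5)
Your proof is correct and is exactly the argument the paper has in mind: the paper derives the corollary "immediately" from (iii) of Proposition \ref{prop_411}, i.e.\ by viewing $\alpha$ as a genuine eigenvalue of the symmetric operator $\pi_\vp(X)$ on $\D_\vp$ and invoking the classical reality argument, which is precisely what you spell out. The only difference is that you make the routine steps (symmetry of $\pi_\vp(X)$ from $X=X^*$, strict positivity of $\|\lambda_\vp(A)\|^2=\vp(A,A)$) explicit, which the paper leaves to the reader.
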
}

\begin{remark}
It is natural to expect that, if $X=X^*$, generalized eigenvectors corresponding to different generalized eigenvalues  are orthogonal, in some sense. For this we need some additional assumption.

Let $\vp, \psi \in \PA$ and let $\pi_\vp$, $\pi_\psi$ the corresponding closed GNS representations. Assume that $\pi_\vp$, $\pi_\psi$ are {\em intertwined} by a bounded operator $T: \Hil_\vp \to \Hil_\psi$ such that $T: \lambda_\vp(\Ao) \to \D(\pi_\psi)$, the domain of $\pi_\psi$, and 
$$T\pi_\vp(X)\lambda_\vp (A)=\pi_\psi(X)T\lambda_\vp (A), \quad \forall X\in \A, A\in \Ao.$$
Suppose now that $\vp$ is a generalized eigenvector of $X$ with eigenvalue $\alpha \in {\mb R}$; then there exists $A\in \Ao$ such that $\vp(A,A)>0$ and $\pi_\vp(X)\lambda_\vp(A)= \alpha \lambda_\vp(A).$
It is easily checked that $$\pi_\psi(X)T\lambda_\vp(A)=\alpha T\lambda_\vp(A).$$ Thus if, $T\lambda_\vp(A)\neq 0$, $T\lambda_\vp(A)$ is a generalized eigenvector of $X$, corresponding to $\alpha$.
Suppose that $\lambda_\psi(B)$, $B\in \Ao$, is an eigenvector of $\pi_\psi (X)$ corresponding to the eigenvalue $\beta \neq \alpha$. Then $\lambda_\psi (B)$ and $T\lambda_\vp(A)$ are orthogonal in $\Hil_\psi.$
\end{remark}
\bigskip
\section{Eigenstates and dynamics} 
Let $H$ be a hermitian element of $\Ao$. 
Since $\Ao$ is a C*-algebra, $e^{itH}\in \Ao$ for all $t\in\mathbb{R}$; so we can define
\begin{equation} \label{eqn_group}
\alpha_t^H(X)
:= e^{itH}Xe^{-itH} \nonumber
\end{equation}
for each $X\in \A$ and $t\in\mathbb{R}$, and $\alpha_t^H$ is a $\ast$-automorphism of $\A$ in the following sense:\\
$\alpha_t^H$ is a bijection and linear map of $\A$ onto $\A$ satisfying
\begin{eqnarray}
\alpha_t^H(I)&=&I, \; \alpha_t^H(AX)=\alpha_t^H(A)\alpha_t^H(X), \nonumber \\
\alpha_t^H(XA)&=& \alpha_t^H(X)\alpha_t^H(A), \; \alpha_t^H(X^\ast)=\alpha_t^H(X)^\ast \nonumber
\end{eqnarray}
for all $A\in \Ao$ and $X\in \A$.
Furthermore, we can easily show the following
\begin{lemma}
$\{ \alpha_t^H\}$ is a $\|\cdot\|$-continuous one-parameter group of $\ast$-automorphisms of $\A$, that is,
\begin{eqnarray}
\alpha_0^H(X)=I, \; \alpha_{s+t}^H(X)=\alpha_s^H(\alpha_t^H(X)).\nonumber
\end{eqnarray}
\end{lemma}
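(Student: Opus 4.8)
The plan is to treat the three assertions separately: first the identity $\alpha_0^H(X)=X$ (the statement's ``$=I$'' should read ``$=X$''), then the flow property $\alpha_{s+t}^H=\alpha_s^H\circ\alpha_t^H$, and finally the $\|\cdot\|$-continuity of $t\mapsto\alpha_t^H(X)$. Throughout, the crucial structural point is that all the ``inner'' factors $e^{\pm itH}$ lie in $\Ao$: since $H=H^*$ they are unitaries of the C*-algebra, so $\|e^{\pm itH}\|_0=1$ and $(e^{itH})^*=e^{-itH}$, and $\alpha_t^H(X)=e^{itH}Xe^{-itH}$ is built purely out of the left and right module actions of $\Ao$ on $\A$; nowhere do I need the (undefined) product of two generic elements of $\A$. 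The identity at $t=0$ is then immediate, since $e^0=I$ is the unit of $\Ao$ and $IX=XI=X$.

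For the flow property I would reduce everything to the bimodule associativity laws of the quasi*-algebra $(\A,\Ao)$ together with the C*-algebra identity $e^{isH}e^{itH}=e^{i(s+t)H}$ in $\Ao$ (valid because the exponentials of the single hermitian $H$ commute). Writing $Y:=\alpha_t^H(X)=(e^{itH}X)e^{-itH}\in\A$, I would compute $\alpha_s^H(Y)=e^{isH}Ye^{-isH}$ one factor at a time: the compatibility law $(ca)d=c(ad)$ lets me move the outer left factor $e^{isH}$ inside past the right factor $e^{-itH}$; the left-module law $(cd)a=c(da)$ then fuses $e^{isH}(e^{itH}X)=e^{i(s+t)H}X$; and the right-module law $a(cd)=(ac)d$ fuses the two trailing factors into $e^{-i(s+t)H}$. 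Collecting terms yields $(e^{i(s+t)H}X)e^{-i(s+t)H}=\alpha_{s+t}^H(X)$, which is the claim.

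For $\|\cdot\|$-continuity I would estimate directly at an arbitrary $t_0$ via the telescoping
\begin{equation*}
\alpha_t^H(X)-\alpha_{t_0}^H(X)=(e^{itH}-e^{it_0H})Xe^{-itH}+e^{it_0H}X(e^{-itH}-e^{-it_0H}),
\end{equation*}
and then apply the coupling inequalities $\|AZ\|\leq\|A\|_0\|Z\|$ and $\|ZA\|\leq\|Z\|\|A\|_0$ of the CQ*-algebra, together with $\|e^{\pm itH}\|_0=1$, to obtain
\begin{equation*}
\|\alpha_t^H(X)-\alpha_{t_0}^H(X)\|\leq\|X\|\big(\|e^{itH}-e^{it_0H}\|_0+\|e^{-itH}-e^{-it_0H}\|_0\big).
\end{equation*}
The right-hand side tends to $0$ as $t\to t_0$ because $t\mapsto e^{itH}$ is norm-continuous in $\|\cdot\|_0$; this last fact is the familiar C*-estimate $\|e^{isH}-I\|_0\leq e^{|s|\|H\|_0}-1\to0$, applied after factoring out a unitary.

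The routine parts are the exponential estimate and the telescoping; the one place demanding care is the flow property, where I must track exactly which associativity law of the bimodule is invoked at each rearrangement, since the full product on $\A$ is unavailable and only the module actions of $\Ao$ are at my disposal. Once the bookkeeping of these associativity identities is carried out correctly, the remainder is a direct application of the norm inequalities already recorded for the CQ*-algebra.
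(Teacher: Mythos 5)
Your proof is correct and is precisely the routine verification the paper leaves to the reader (the lemma is introduced with ``we can easily show''): the group law follows from the bimodule associativity identities of $(\A,\Ao)$ together with $e^{isH}e^{itH}=e^{i(s+t)H}$ in the C*-algebra $\Ao$, and your continuity estimate via the coupling inequalities $\|AZ\|\leq\|A\|_0\|Z\|$, $\|ZA\|\leq\|Z\|\|A\|_0$ and the $\|\cdot\|_0$-continuity of $t\mapsto e^{itH}$ is exactly the technique the paper itself employs in the proof of the adjacent Lemma \ref{lemma46}. You are also right that $\alpha_0^H(X)=I$ in the statement is a typo for $\alpha_0^H(X)=X$.
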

$(\A,\{\alpha_t^H\})$ is called a {\em dynamical system}.
{\begin{xrem}
Suppose that $H\in \A$ and $H^\ast=H$. Then we can not define $\alpha_t^H(X)$ as in \eqref{eqn_group}, because
$$
e^{itH}=\sum_{n=0}^\infty \frac{(it)^n}{n!}H^n 
$$
is not well defined.
\end{xrem}}
\begin{lemma} \label{lemma46}
Let $H\in \Ao$.
For any $X\in \A$ we have
\begin{eqnarray}
\lim_{t\rightarrow 0}\| e^{itH}X-X\|&= 0& , \quad\lim_{t\rightarrow 0}\| Xe^{itH}-X\|= 0,\nonumber \\
\lim_{t\rightarrow 0}\|\frac{e^{itH}-I}{t}X-iHX\|&=0&, \quad \lim_{t\rightarrow 0}\| X\frac{e^{itH}-I}{t}-iXH\|=0,\nonumber \\
\lim_{t\rightarrow 0}\| \alpha_t^H(X)-X\|&=0&, \quad \lim_{t\rightarrow 0}\|\frac{\alpha_t^H(X)-I}{t}-i[H,X]\|=0.\nonumber
\end{eqnarray}
\end{lemma}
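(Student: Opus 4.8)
The plan is to reduce all six limits to two fundamental estimates valid in the C*-algebra $\Ao$, and then transfer them to the weaker norm $\|\cdot\|$ using the module inequalities $\|AX\|\le\|A\|_0\|X\|$ and $\|XA\|\le\|X\|\|A\|_0$. The two C*-norm facts I would establish first express that, since $H\in\Ao$ is bounded and hermitian, $t\mapsto e^{itH}$ is norm-continuous and norm-differentiable at $0$ with derivative $iH$. Concretely, from $e^{itH}=I+itH+\sum_{n\ge 2}\frac{(it)^n}{n!}H^n$ one obtains
$$\|e^{itH}-I\|_0\le e^{|t|\|H\|_0}-1 \xrightarrow[t\to 0]{}0, \qquad \Big\|\frac{e^{itH}-I}{t}-iH\Big\|_0\le \frac{1}{|t|}\big(e^{|t|\|H\|_0}-1-|t|\|H\|_0\big)\xrightarrow[t\to 0]{}0.$$
I would also record that $\|e^{itH}\|_0=1$ (unitarity in $\Ao$) and that $\|\frac{e^{itH}-I}{t}\|_0$ stays bounded as $t\to 0$.

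With these in hand the first four limits are immediate. Since $e^{itH}X-X=(e^{itH}-I)X$ with $e^{itH}-I\in\Ao$, the module inequality gives $\|e^{itH}X-X\|\le\|e^{itH}-I\|_0\|X\|\to 0$; the symmetric statement follows from $\|XA\|\le\|X\|\|A\|_0$. The two differential limits follow the same way after writing $\frac{e^{itH}-I}{t}X-iHX=\big(\frac{e^{itH}-I}{t}-iH\big)X$ and applying the second C*-estimate together with the module inequalities.

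For the fifth limit I would split $\alpha_t^H(X)-X=(e^{itH}-I)X+e^{itH}X(e^{-itH}-I)$ and bound the two summands by $\|e^{itH}-I\|_0\|X\|$ and $\|e^{itH}\|_0\|X\|\,\|e^{-itH}-I\|_0=\|X\|\,\|e^{-itH}-I\|_0$, both tending to $0$. The sixth (differential) limit, which is to be read as $\frac{\alpha_t^H(X)-X}{t}\to i[H,X]$, is the main point. Here I would use the telescoping identity
$$\frac{\alpha_t^H(X)-X}{t}=\frac{e^{itH}-I}{t}\,X\,e^{-itH}+X\,\frac{e^{-itH}-I}{t}.$$
The second term tends to $-iXH$ by the fourth limit applied with $t\mapsto -t$. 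For the first term I would write $\frac{e^{itH}-I}{t}Xe^{-itH}-iHX=\frac{e^{itH}-I}{t}X(e^{-itH}-I)+\big(\frac{e^{itH}-I}{t}-iH\big)X$; the second piece vanishes as in the third limit, while the first piece is controlled by $\|\frac{e^{itH}-I}{t}\|_0\,\|X\|\,\|e^{-itH}-I\|_0$, which goes to $0$ since $\|\frac{e^{itH}-I}{t}\|_0$ remains bounded and $\|e^{-itH}-I\|_0\to 0$. Summing yields $iHX-iXH=i[H,X]$.

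The only delicate point, and the main obstacle, is the bookkeeping in the sixth limit: one must arrange that the factor $\frac{e^{itH}-I}{t}$, whose C*-norm tends to $\|H\|_0\neq 0$ rather than vanishing, is in every cross term paired with a factor that does vanish (namely $e^{-itH}-I$ in C*-norm, or else is absorbed by the differential estimate), so that each residual term is genuinely $o(1)$. Once the two C*-norm estimates, unitarity, and the module inequalities are in place, everything else is routine.
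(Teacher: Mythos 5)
Your proof is correct and takes essentially the same approach as the paper: every limit is reduced to a C*-norm estimate in $\Ao$ via the module inequalities $\|AX\|\le\|A\|_0\|X\|$ and $\|XA\|\le\|X\|\|A\|_0$, which is precisely how the paper handles the two limits it writes out before declaring the rest ``similar.'' Your telescoping decompositions for $\alpha_t^H(X)-X$ and $\frac{\alpha_t^H(X)-X}{t}$ (together with the correct reading of the misprint $\frac{\alpha_t^H(X)-I}{t}$) simply supply the details the paper leaves to the reader.
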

\begin{proof}
For any $X\in \A$ we have
\begin{eqnarray}
\| e^{itH}X-X\|&\leqq& \| e^{itH}-I\|_0\|X\|\xrightarrow[t\rightarrow 0]{} 0,\nonumber \\
\| \frac{e^{itH}-I}{t}X-iHX\|&\leqq& \|\frac{e^{itH}-I}{t}-iH\|_0\|X\| \xrightarrow[t\rightarrow0]{} 0.\nonumber
\end{eqnarray}
The other statements can be proved in similar way. 
This completes the proof.
\end{proof}
Here we put, for $H\in \Ao$ as in the previous Lemma, 
\begin{eqnarray}
\delta_H(X)
=i[H,X]:=i(HX-XH), \;\; X\in \A.\nonumber
\end{eqnarray}
Then $\delta_H$ is a linear mapping from $\A$ to $\A$ satisfying
\begin{eqnarray}
\delta_H(AX)
&=& \delta_H(A)X+A\delta_H(X), \nonumber \\
\delta_H(XA)
&=& \delta_H(X)A+X\delta_H(A), \nonumber \\
\delta_H(X)^\ast
&=&\delta_H(X^\ast)\nonumber
\end{eqnarray}
for all $X\in \A$ and $A\in \Ao$
and it is called a {\em $\ast$-derivation} of $\A$.
\begin{lemma}\label{lemma47}
Let $H^\ast=H\in \Ao$ and $\omega\in E(\Ao)$. Consider the following
\begin{enumerate}[(i)]
\item $\overline{\omega}$ is an eigenstate of $H$ with eigenvalue $\alpha$
\item $\omega$ is an eigenstate of $H$ with eigenvalue $\alpha$
\item $\overline{\omega}$ is an eigenstate of $e^{itH}$ with eigenvalue $e^{it\alpha}$
\item $\omega$ is an eigenstate of $e^{itH}$ with eigenvalue $e^{it\alpha}$.
\end{enumerate}
Then
\begin{equation}
\begin{array}{ccc}
(i) &\Leftrightarrow&(ii) \\
&\Downarrow&\\
(iii) &\Leftrightarrow&(iv).\\
\end{array}
\nonumber
\end{equation}
\end{lemma}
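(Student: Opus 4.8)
The plan is to dispose first of the two horizontal equivalences, which are tautological once one notices where the relevant products live, and then to concentrate on the single substantive step, the vertical arrow. Since $H=H^\ast\in\Ao$ and $\Ao$ is a C*-algebra, the exponential $e^{itH}=\sum_{n\ge 0}\frac{(it)^n}{n!}H^n$ again belongs to $\Ao$ for every $t\in\R$, the series converging in the C*-norm $\|\cdot\|_0$. Hence, for every $A\in\Ao$, both $AH$ and $Ae^{itH}$ lie in $\Ao$, where the extension $\overline{\omega}$ coincides with $\omega$ by construction. Therefore the defining identity $\overline{\omega}(AH)=\alpha\overline{\omega}(A)$ of (i) is \emph{verbatim} the identity $\omega(AH)=\alpha\omega(A)$ of (ii), and $\overline{\omega}(Ae^{itH})=e^{it\alpha}\overline{\omega}(A)$ of (iii) is $\omega(Ae^{itH})=e^{it\alpha}\omega(A)$ of (iv). This settles (i)$\Leftrightarrow$(ii) and (iii)$\Leftrightarrow$(iv), and reduces the lemma to proving (ii)$\Rightarrow$(iv).

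For that implication I would assume $\omega(AH)=\alpha\omega(A)$ for all $A\in\Ao$ and first establish, by induction on $n$, that $\omega(AH^n)=\alpha^n\omega(A)$ for every $A\in\Ao$ and every $n\ge 0$. The case $n=0$ is trivial; for the inductive step I write $AH^{n+1}=(AH^n)H$ and apply the hypothesis to $B:=AH^n\in\Ao$, obtaining $\omega(AH^{n+1})=\alpha\,\omega(AH^n)=\alpha^{n+1}\omega(A)$. Fixing now $A\in\Ao$ and $t\in\R$, the estimate $\|AH^n\|_0\le\|A\|_0\|H\|_0^n$ shows that $\sum_{n\ge 0}\frac{(it)^n}{n!}AH^n$ converges absolutely to $Ae^{itH}$ in $\|\cdot\|_0$. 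As $\omega$ is $\|\cdot\|$-continuous and $\|\cdot\|\le\|\cdot\|_0$, it is also $\|\cdot\|_0$-continuous, so I may pass $\omega$ through the sum term by term to get
\[
\omega(Ae^{itH})=\sum_{n\ge 0}\frac{(it)^n}{n!}\,\omega(AH^n)=\omega(A)\sum_{n\ge 0}\frac{(it\alpha)^n}{n!}=e^{it\alpha}\omega(A),
\]
which is precisely (iv).

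The only point requiring genuine care — the main obstacle, such as it is — is the justification of this termwise passage of $\omega$ through the infinite series, i.e. the pairing of the $\|\cdot\|_0$-convergence of the exponential with the $\|\cdot\|_0$-continuity of $\omega$; everything else is elementary algebra and an induction. I note finally that the diagram asserts no reverse arrow (iii)$\Rightarrow$(i), so no differentiation at $t=0$ (which would recover (ii) via Lemma \ref{lemma46}) is needed here.
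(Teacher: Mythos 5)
Your treatment of the vertical arrow is correct, and it is in fact more self-contained than the paper's: where you prove $\omega(AH^n)=\alpha^n\omega(A)$ by induction and then sum the exponential series term by term (using that $\omega$ is $\|\cdot\|_0$-continuous because $\|\cdot\|\leq\|\cdot\|_0$), the paper simply invokes Theorem 2.13 of De Nittis--Polo \cite{3} for the C*-algebra $\Ao$ to get $\omega(Ae^{itH})=e^{it\alpha}\omega(A)$, and then extends by $\|\cdot\|$-continuity. Your argument amounts to an elementary reproof of that cited theorem, and it is sound.

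The gap lies in the horizontal equivalences, which you dismiss as verbatim tautologies. That reading is not the one the lemma intends, and the paper's own proof shows it: in (i), ``$\overline{\omega}$ is an eigenstate of $H$'' is meant with test elements running over the whole of $\A$, that is, $\overline{\omega}(XH)=\alpha\,\overline{\omega}(X)$ for all $X\in\A$ (meaningful because $H\in\Ao$, so the module product $XH$ is defined for every $X\in\A$), whereas (ii) only requires $\omega(AH)=\alpha\,\omega(A)$ for $A\in\Ao$. Indeed, if (i) and (ii) were literally the same statement there would be no reason for the lemma to list them separately; moreover Definition \ref{def41} only defines eigenstates for the extension $\overline{\omega}$, so (ii) must be read via the C*-algebra notion of \cite{3}. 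Consequently (ii)$\Rightarrow$(i) and (iv)$\Rightarrow$(iii) carry real content, and this is precisely where the paper does its work: given $X\in\A$, choose $A_n\in\Ao$ with $\|A_n-X\|\to 0$; then $\|A_nH-XH\|\leq\|A_n-X\|\,\|H\|_0\to 0$ by the norm axiom $\|AB\|\leq\|A\|\,\|B\|_0$, and the $\|\cdot\|$-continuity of $\overline{\omega}$ yields $\overline{\omega}(XH)=\lim_n\omega(A_nH)=\lim_n\alpha\,\omega(A_n)=\alpha\,\overline{\omega}(X)$; the same density-plus-continuity argument with $e^{itH}\in\Ao$ in place of $H$ gives (iv)$\Rightarrow$(iii). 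This step is entirely absent from your proposal, so as written it proves only the trivialized version of the statement; once you add it, your proof is complete and, on the vertical arrow, genuinely more self-contained than the original.
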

\begin{proof}
(i)$\Rightarrow$(ii) This is trivial. \\
Take an arbitrary $X\in \A$.
There exists a sequence $\{ A_n\}$ in $\Ao$ such that $\lim_{n\rightarrow \infty}\| A_n-X\|=0$.\\
(ii)$\Rightarrow$(i) Since
$A_n H\in \Ao \xrightarrow[\|\cdot\|]{}XH$
and $\overline{\omega}$ is $\|\cdot\|$-continuous,
it follows that
$\omega(A_nH) \rightarrow \overline{\omega}(XH)$
and $\alpha\omega(A_n)\rightarrow \alpha \overline{\omega}(X)$,
so $\overline{\omega}(XH)=\alpha\overline{\omega}(X)$.
Hence (i) holds.\\
(ii)$\Rightarrow$(iv) By \cite{3} Theorem 2.13 $\omega(A_n e^{itH})=e^{it\alpha}\omega(A_n)$ for all $t\in\mathbb{R}$.
By the $\|\cdot \|$-continuity of $\omega$ we have $\overline{\omega}(Xe^{itH})=e^{it\alpha}\overline{\omega}(X)$, so (iv) holds.\\
(iii)$\Leftrightarrow$(iv) We can proof in the same way as (i)$\Leftrightarrow$(ii). This completes the proof.
\end{proof}
\begin{defn}
Let $H^\ast=H\in \Ao$ and $\omega\in E(\Ao)$. The state $\overline{\omega}$ of $\A$ is said to be {\em invariant under $\alpha_t^H$} if 
\begin{eqnarray}
\overline{\omega}(\alpha_t^H(X))=\overline{\omega}(X) \;\;\; {\rm for \; all}\; X\in \A, \; t\in \mathbb{R}.\nonumber
\end{eqnarray}
\end{defn}

\begin{theorem} \label{theorem49}
Let $H^\ast=H\in \Ao$ and $\omega\in E(\Ao)$. Consider the following
\begin{enumerate}[(i)]
\item $\overline{\omega}$ is an eigenstate of $H$ with an eigenvalue in $\sigma_\A(H).$
\item $\omega$ is an eigenstate of $H$ with an eigenvalue in $\sigma_{\Ao}(H).$
\item $\overline{\omega}$ is invariant under $\alpha_t^H.$
\item $\omega$ is invariant under $\alpha_t^H.$
\end{enumerate}
Then
\begin{eqnarray}
\begin{array}{ccc}
&(i)&\\
&\Downarrow&\\
&(ii)&\\
&\Downarrow&\\
(iii)&\Leftrightarrow&(iv)\\
\end{array}
\nonumber
\end{eqnarray}
\end{theorem}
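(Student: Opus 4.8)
The plan is to prove the three implications in the diagram, namely $(i)\Rightarrow(ii)$, $(ii)\Rightarrow(iii)$, and $(iii)\Leftrightarrow(iv)$, using as main ingredients the transfer relations of Lemma~\ref{lemma47}, the spectral inclusion of Lemma~\ref{lemma44}, and the $\|\cdot\|$-continuity of $\overline{\omega}$ and of the automorphisms $\alpha_t^H$ recorded in Lemma~\ref{lemma46}. A preliminary remark I would make is that the eigenvalue $\alpha$ is necessarily real: taking $A=I$ in Definition~\ref{def41} gives $\alpha=\overline{\omega}(H)$, which is real since $\overline{\omega}$ is a (hermitian) state and $H^*=H$; this will be needed to pass from right to left multiplication below. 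The implication $(i)\Rightarrow(ii)$ is then immediate: the equivalence $(i)\Leftrightarrow(ii)$ of Lemma~\ref{lemma47} turns the eigenstate $\overline{\omega}$ of $H$ into an eigenstate $\omega$ of $H$ with the same eigenvalue, and the membership $\alpha\in\sigma_{\Ao}(H)$ is automatic, because $\alpha\in Eig(H)\subset\sigma_{\Ao}^L(H)\subset\sigma_{\Ao}(H)$ by Lemma~\ref{lemma44}.

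The core of the proof is $(ii)\Rightarrow(iii)$. From $(ii)$, Lemma~\ref{lemma47} yields that $\overline{\omega}$ is an eigenstate of $e^{itH}$ with eigenvalue $e^{it\alpha}$, that is $\overline{\omega}(Ae^{itH})=e^{it\alpha}\overline{\omega}(A)$ for all $A\in\Ao$; approximating an arbitrary $X\in\A$ by a sequence in $\Ao$ and using $\|\cdot\|$-continuity of $\overline{\omega}$ extends this to $\overline{\omega}(Xe^{itH})=e^{it\alpha}\overline{\omega}(X)$ for all $X\in\A$, and replacing $t$ by $-t$ gives the analogue for $e^{-itH}$. Next I would produce the left-multiplication version from hermiticity of $\overline{\omega}$ together with $\alpha\in\R$, via
$$\overline{\omega}(e^{itH}X)=\overline{\overline{\omega}(X^*e^{-itH})}=\overline{e^{-it\alpha}\overline{\omega}(X^*)}=e^{it\alpha}\overline{\omega}(X).$$
Setting $Y=Xe^{-itH}\in\A$, this gives $\overline{\omega}(\alpha_t^H(X))=\overline{\omega}(e^{itH}Y)=e^{it\alpha}\overline{\omega}(Xe^{-itH})=e^{it\alpha}e^{-it\alpha}\overline{\omega}(X)=\overline{\omega}(X)$ for every $X\in\A$, which is precisely invariance under $\alpha_t^H$.

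For $(iii)\Leftrightarrow(iv)$, the implication $(iii)\Rightarrow(iv)$ follows by restricting the invariance identity to $X=A\in\Ao$, on which $\overline{\omega}$ and $\omega$ agree and $\alpha_t^H(A)\in\Ao$. For $(iv)\Rightarrow(iii)$ I would take $X\in\A$ and $A_n\in\Ao$ with $\|A_n-X\|\to0$; since $\alpha_t^H$ is $\|\cdot\|$-continuous by Lemma~\ref{lemma46}, $\alpha_t^H(A_n)\to\alpha_t^H(X)$, and $\|\cdot\|$-continuity of $\overline{\omega}$ then gives $\overline{\omega}(\alpha_t^H(X))=\lim_n\omega(\alpha_t^H(A_n))=\lim_n\omega(A_n)=\overline{\omega}(X)$.

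The step I expect to require most care is $(ii)\Rightarrow(iii)$: the eigenstate relation in Definition~\ref{def41} fixes the algebra element on one side, so obtaining the left-multiplication identity genuinely uses the reality of $\alpha$ and the hermiticity of the state, and each passage from $\Ao$ to $\A$ must be justified by $\|\cdot\|$-continuity, the module multiplication not being jointly continuous. I would also note explicitly that, as the Remark preceding Lemma~\ref{lemma44} observes, $\sigma_\A$ is not defined for general elements; since $H\in\Ao$, the spectral condition in $(i)$ is to be read as membership in $\sigma_{\Ao}(H)$, in line with Lemma~\ref{lemma44}.
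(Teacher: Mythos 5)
Your proposal is correct, and for the central implication it takes a genuinely different route from the paper's. For (i)$\Rightarrow$(ii) the paper combines Lemma \ref{lemma47} with the inclusion $\sigma_\A(H)\subset\sigma_{\Ao}(H)$, whereas you invoke Lemma \ref{lemma44} ($Eig(H)\subset\sigma^L_{\Ao}(H)\subset\sigma_{\Ao}(H)$); both work, though your side remark is slightly off: since $H\in\Ao$, the products $Y(H-\alpha I)$ are defined for every $Y\in\A$, so $\sigma_\A(H)$ does make sense for $H\in\Ao$ and the paper's inclusion is legitimate (this is harmless, as you never actually use the spectral part of hypothesis (i)). The real divergence is (ii)$\Rightarrow$(iii): the paper proves (ii)$\Rightarrow$(iv) by citing the C*-algebra result \cite[Proposition 3.1]{3} (an eigenstate of $H$ is automatically $\alpha_t^H$-invariant on $\Ao$) and then upgrades (iv) to (iii) by the density/continuity argument; you instead prove (ii)$\Rightarrow$(iii) directly, by extending the exponential eigenstate identity of Lemma \ref{lemma47} to all of $\A$ via $\|\cdot\|$-continuity, deriving the left-handed identity $\overline{\omega}(e^{itH}X)=e^{it\alpha}\overline{\omega}(X)$ from hermiticity of $\overline{\omega}$ together with the reality of $\alpha$ (which you rightly establish first, by taking $A=I$), and composing the two identities using the quasi-algebra associativity $e^{itH}Xe^{-itH}=e^{itH}(Xe^{-itH})$. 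Your route makes explicit exactly where hermiticity and $\alpha\in\R$ enter and works at the level of $\A$ throughout; the paper's route is shorter because it delegates the algebraic work to the cited C*-result (note your argument still leans on Lemma \ref{lemma47}, whose proof cites \cite[Theorem 2.13]{3}, so the dependence on \cite{3} is reduced, not eliminated). One small citation slip: in (iv)$\Rightarrow$(iii) you attribute the $\|\cdot\|$-continuity of the map $X\mapsto\alpha_t^H(X)$ to Lemma \ref{lemma46}, but that lemma only concerns limits as $t\to0$; the fact you need follows instead from the norm estimates, $\|\alpha_t^H(A_n-X)\|\leq\|e^{itH}\|_0\,\|A_n-X\|\,\|e^{-itH}\|_0=\|A_n-X\|$, which is exactly the inline computation the paper performs at this step.
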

\begin{proof}
(i)$\Rightarrow$(ii) It follows from Lemma \ref{lemma47} and $\sigma_\A(H)\subset \sigma_{\Ao}(H)$.\\
(iii)$\Rightarrow$(iv) This is trivial.\\
(iv)$\Rightarrow$(iii) Take an arbitrary $X\in \A$. There exists a sequence $\{ A_n\}$ in $\Ao$ such that $\lim_{n\rightarrow\infty}\|A_n-X\|=0$. Then we have
\begin{eqnarray}
\alpha_t^H(A_n)=e^{itH}A_ne^{-itH} \in \Ao \;\;{\rm for \; all} \; n\in \mathbb{N} \nonumber
\end{eqnarray}
and
\begin{eqnarray}
\lim_{n\rightarrow\infty}\|\alpha_t^H(A_n)-\alpha_t^H(X)\|
&\leqq& \lim_{n\rightarrow\infty}\|I-{itH}\|_0 \|e^{-itH}\|_0 \|A_n-X\| \nonumber \\
&=& \lim_{n\rightarrow\infty}\|A_n-X\|\nonumber \\
&=& 0.\nonumber
\end{eqnarray}
Since $\overline{\omega}$ is $\|\cdot\|$-continuous, we have
\begin{eqnarray}
\overline{\omega}(\alpha_t^H(X))
&=& \lim_{n\rightarrow\infty}\omega(\alpha_t^H(A_n)) \nonumber \\
&=& \lim_{n\rightarrow\infty}\omega(A_n) \nonumber \\
&=& \overline{\omega}(X). \nonumber
\end{eqnarray}
(ii)$\Rightarrow$(iv) It follows from \cite{3} Proposition 3.1.\\
This completes the proof.
\end{proof}

\subsection{Ground states}
In {Section 5} we considered the case of $H^\ast=H\in \Ao$ 
Here we shall consider the case of $H^\ast=H\in \A$ and $\omega\in E(\Ao)$.
\begin{defn} \label{def410}
The state $\overline{\omega}$ of $\A$ is said to be a {\em ground state for $H$} if
\begin{enumerate}[(i)]
\item $\overline{\omega}$ is a eigenstate for $H$ with an eigenvalue $\alpha_\ast$
\item $<\pi_{\overline{\omega}}(H)\lambda_\omega(B),\lambda_\omega(B)> \geqq \alpha_\ast (\lambda_\omega(B)|\lambda_\omega(B))$ for all $B\in \Ao$.
\end{enumerate}
\end{defn}
We define the spectrum of the form $\pi_{\overline{\omega}}(H)$ as follows:
\begin{defn}\label{def411}
We denote by $Spec(\pi_{\overline{\omega}}(H))$ the set of all $\alpha\in\mathbb{C}$ such that $[\pi_{\overline{\omega}}(H)\lambda_\omega(B)]=\alpha \lambda_\omega(B)$ and $\lambda_\omega(B)\neq 0$ for some $B\in \Ao$, that is, $\lambda_\omega(B)\neq 0\in Ker(\pi_{\overline{\omega}}(H)-\alpha I)$ for some $B\in \Ao.$ This set is called the spectrum of $\pi_{\overline{\omega}}(H)$.
\end{defn}
\begin{theorem}\label{prop412}
Suppose that $H^\ast=H\in \A$ and $\omega\in E(\Ao)$. If $\overline{\omega}$ is a ground state for $H$, then the following statements hold.
\begin{enumerate}[(1)]
\item $-i \; \overline{\omega}(A^\ast \delta_H(A))\geqq 0$ for all $A\in \Ao$.
\item $\overline{\omega}(\delta_H(A))=0$ for all $A\in \Ao$.
\item $\alpha_\ast= \min Spec (\pi_{\overline{\omega}}(H))$.
\end{enumerate}
\end{theorem}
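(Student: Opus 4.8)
The plan is to push everything through the GNS picture of Proposition~\ref{prop32}, where $\Hil_{\overline{\omega}}=\Hil_\omega$ and $\D(\pi_{\overline{\omega}})=\lambda_\omega(\Ao)$. The key preliminary fact I would establish is the identity
$$ <\pi_{\overline{\omega}}(X)\lambda_\omega(B),\lambda_\omega(C)> = \overline{\omega}(C^\ast XB), \qquad X\in\A,\ B,C\in\Ao. $$
This is read off directly from the construction in Proposition~\ref{prop32}: if $A_n\in\Ao$ with $\|A_n-X\|\to 0$, then the left-hand side equals $\lim_n(\pi_\omega(A_n)\lambda_\omega(B)|\lambda_\omega(C))=\lim_n\omega(C^\ast A_nB)$, and since left and right multiplications by elements of $\Ao$ are $\|\cdot\|$-continuous we have $C^\ast A_nB\to C^\ast XB$, whence the claim follows from the $\|\cdot\|$-continuity of $\overline{\omega}$. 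Specializing to $X=H$ and $B=C=A$ gives $\overline{\omega}(A^\ast HA)=<\pi_{\overline{\omega}}(H)\lambda_\omega(A),\lambda_\omega(A)>$, a real number because $(A^\ast HA)^\ast=A^\ast HA$ and $\overline{\omega}$ is hermitian. I would also note at once that $\alpha_\ast$ is real: applying the eigenstate relation $\overline{\omega}(AH)=\alpha_\ast\,\omega(A)$ with $A=I$ yields $\alpha_\ast=\overline{\omega}(H)$, which is real since $H=H^\ast$.

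For statement (1), write $\delta_H(A)=i(HA-AH)$ and use the bimodule associativity laws $A^\ast(HA)=A^\ast HA$ and $A^\ast(AH)=(A^\ast A)H$ to get
$$ -i\,\overline{\omega}(A^\ast\delta_H(A))=\overline{\omega}(A^\ast HA)-\overline{\omega}((A^\ast A)H). $$
The eigenstate relation applied to $A^\ast A\in\Ao$ gives $\overline{\omega}((A^\ast A)H)=\alpha_\ast\,\omega(A^\ast A)=\alpha_\ast(\lambda_\omega(A)|\lambda_\omega(A))$, while the preliminary identity turns the first term into $<\pi_{\overline{\omega}}(H)\lambda_\omega(A),\lambda_\omega(A)>$. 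Hence
$$ -i\,\overline{\omega}(A^\ast\delta_H(A))=<\pi_{\overline{\omega}}(H)\lambda_\omega(A),\lambda_\omega(A)>-\alpha_\ast(\lambda_\omega(A)|\lambda_\omega(A)), $$
which is nonnegative by condition (ii) of Definition~\ref{def410} taken at $B=A$.

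For statement (2), $\overline{\omega}(\delta_H(A))=i\big(\overline{\omega}(HA)-\overline{\omega}(AH)\big)$, so it suffices to show the two terms coincide. The eigenstate relation gives $\overline{\omega}(AH)=\alpha_\ast\,\omega(A)$ directly. For the other term I would use hermiticity of $\overline{\omega}$ together with $(HA)^\ast=A^\ast H$: this gives $\overline{\omega}(HA)=\overline{\overline{\omega}(A^\ast H)}=\overline{\alpha_\ast\,\omega(A^\ast)}=\alpha_\ast\,\omega(A)$, where the second equality is the eigenstate relation at $A^\ast$ and the last uses that $\alpha_\ast$ is real and $\omega$ is hermitian. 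The two terms thus agree and $\overline{\omega}(\delta_H(A))=0$.

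For statement (3), I would show $\alpha_\ast$ both lies in $Spec(\pi_{\overline{\omega}}(H))$ and bounds it from below. Membership is Lemma~\ref{lemma42}: since $\overline{\omega}$ is an eigenstate of $H$ with eigenvalue $\alpha_\ast$, we have $[\pi_{\overline{\omega}}(H)\lambda_\omega(I)]=\alpha_\ast\lambda_\omega(I)$ with $\lambda_\omega(I)\neq 0$ (as $\|\lambda_\omega(I)\|^2=\omega(I)=1$), so $\alpha_\ast\in Spec(\pi_{\overline{\omega}}(H))$ by Definition~\ref{def411}. For the lower bound, take any $\alpha\in Spec(\pi_{\overline{\omega}}(H))$ witnessed by $B\in\Ao$ with $\lambda_\omega(B)\neq 0$ and $[\pi_{\overline{\omega}}(H)\lambda_\omega(B)]=\alpha\lambda_\omega(B)$. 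Pairing with $\lambda_\omega(B)$ gives $<\pi_{\overline{\omega}}(H)\lambda_\omega(B),\lambda_\omega(B)>=\alpha(\lambda_\omega(B)|\lambda_\omega(B))$, which is real, so $\alpha$ is real; comparing with condition (ii) of Definition~\ref{def410} then forces $\alpha(\lambda_\omega(B)|\lambda_\omega(B))\geq\alpha_\ast(\lambda_\omega(B)|\lambda_\omega(B))$, and dividing by $\|\lambda_\omega(B)\|^2>0$ yields $\alpha\geq\alpha_\ast$. Therefore $\alpha_\ast=\min Spec(\pi_{\overline{\omega}}(H))$. The only genuinely delicate point in the whole argument is the preliminary identity: because $H\in\A\setminus\Ao$, the object $\pi_{\overline{\omega}}(H)\lambda_\omega(B)$ is a priori only an unbounded vector, so each manipulation must be read through the pairing $<\cdot,\cdot>$ and justified by continuity of the one-sided multiplications; once it is secured, (1)--(3) are short computations.
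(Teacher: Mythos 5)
Your proof is correct. For parts (1) and (3) it is essentially the paper's own argument: the paper likewise reduces (1) to the inequality $<(\pi_{\overline{\omega}}(H)-\alpha_\ast I)\lambda_\omega(A),\lambda_\omega(A)>\,\geq 0$ via Definition \ref{def410}(ii), and proves (3) exactly as you do, by pairing a spectral witness $\lambda_\omega(B)$ against itself and invoking the ground-state inequality; your extra remarks — the identity $<\pi_{\overline{\omega}}(X)\lambda_\omega(B),\lambda_\omega(C)>\,=\overline{\omega}(C^\ast XB)$ and the reality of $\alpha_\ast$ and of the spectral values — are used tacitly in the paper, so spelling them out is a small but genuine tightening. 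Part (2) is where you diverge. The paper first proves $\overline{\omega}(\delta_H(A^\ast A))=0$, using the derivation property, hermiticity of $\overline{\omega}$, and part (1) (which guarantees $\overline{\omega}(A^\ast\delta_H(A))$ is purely imaginary, so the two conjugate terms cancel), and then passes to arbitrary $A\in\Ao$ by decomposing $A$ into four positive elements and using the functional calculus ($P=(P^{1/2})^\ast P^{1/2}$). You instead compute $\overline{\omega}(\delta_H(A))$ directly: $\overline{\omega}(AH)=\alpha_\ast\omega(A)$ is the eigenstate relation, while $\overline{\omega}(HA)=\overline{\overline{\omega}(A^\ast H)}=\overline{\alpha_\ast\,\omega(A^\ast)}=\alpha_\ast\omega(A)$ by hermiticity of $\overline{\omega}$ and reality of $\alpha_\ast$. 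Your route is shorter, avoids the functional-calculus step entirely, and uses only condition (i) of Definition \ref{def410}, so it actually establishes the stronger fact that $\overline{\omega}(\delta_H(A))=0$ for \emph{every} eigenstate of a hermitian $H$, not only for ground states. What the paper's route buys in exchange is structural: it derives (2) purely from the positivity statement (1), so the same argument would apply to any state satisfying $-i\,\overline{\omega}(A^\ast\delta_H(A))\geq 0$, even in the absence of an eigenvalue equation.
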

\begin{proof}
(1) Take an arbitrary $A\in \Ao$. Then by Definition \ref{def410} (i) and (ii)
\begin{eqnarray}
-i\;\overline{\omega}(A^\ast \delta_H(A))
&=& <\pi_{\overline{\omega}}(HA-AH)\lambda_\omega(I),\lambda_\omega(A)>\nonumber \\
&=& <\pi_{\overline{\omega}}(H)\lambda_\omega(A),\lambda_\omega(A)>-<\pi_{\overline{\omega}}(H)\lambda_\omega(I),\lambda_\omega(A^\ast A)>\nonumber\\
&=& <\pi_{\overline{\omega}}(H)\lambda_\omega(A),\lambda_\omega(A)>-(\alpha_\ast\lambda_\omega(I)|\lambda_\omega(A^\ast A)) \label{4.2}\\
&=& <(\pi_{\overline{\omega}}(H)-\alpha_\ast I)\lambda_\omega(A),\lambda_\omega(A)>\nonumber \\
&\geqq&0.\nonumber
\end{eqnarray}
(2) Take an arbitrary $A\in \Ao$. Then by (1) we have
\begin{eqnarray}
\overline{\omega}(\delta_H(A^\ast A))
&=& i ( \overline{\omega}(\delta_H(A^\ast)A)-\overline{\omega}(A^\ast \delta_H(A)))\nonumber \\
&=& i(\overline{\omega}((A^\ast\delta_H(A))^\ast)-\overline{\omega}(A^\ast\delta_H(A)))\nonumber \\
&=& \overline{-i\overline{\omega}(A^\ast\delta_H(A))} +i\overline{\omega}(A^\ast\delta_H(A))\nonumber \\
&=& -i\overline{\omega}(A^\ast\delta_H(A))+i\overline{\omega}(A^\ast\delta_H(A))\nonumber \\
&=&0 .\nonumber
\end{eqnarray}
Since $A\in \Ao$ can be expressed as a combination of four positive elements  of $\Ao$, from the functional calculus of C*-algebra, see \cite{2} for instance, we have
\begin{eqnarray}
\overline{\omega}(\delta_H(A))=0. \nonumber
\end{eqnarray}
(3) Take an arbitrary $\alpha\in Spec (\pi_{\overline{\omega}}(H))$. Then, there exists an element $B\in \Ao$ such that $\lambda_\omega(B) \neq 0$ and $[\pi_{\overline{\omega}}(H)\lambda_\omega(B)]=\alpha\lambda_\omega(B)$. By Definition \ref{def410} (ii), we have
\begin{eqnarray}
\alpha_\ast(\lambda_\omega(B)|\lambda_\omega(B))
&\leqq& <\pi_{\overline{\omega}}(H)\lambda_\omega(B),\lambda_\omega(B)>\nonumber \\
&=& \alpha(\lambda_\omega(B)|\lambda_\omega(B)),\nonumber
\end{eqnarray}
so $\lambda_\omega(B)\neq 0$, $\alpha_\ast\leqq\alpha$ because of $\lambda_\omega(B)\neq 0$. Furthermore, since 
\begin{eqnarray}
[ \pi_{\overline{\omega}}(H)\lambda_\omega(I)]=\alpha_\ast \lambda_\omega(I), \nonumber
\end{eqnarray}
we have
\begin{eqnarray}
\alpha_\ast\in Spec [\pi_{\overline{\omega}}(H)]. \nonumber
\end{eqnarray}
Thus (3) holds. This completes the proof.
\end{proof}
\begin{defn}\label{def413}
Let $H^\ast=H\in \A$ and $\omega\in E(\Ao)$.
Suppose the state $\overline{\omega}$ of $\A$ is a ground state of $H$. Then $\overline{\omega}$ is said to be {\em nondegenerate} if $Ker (\pi_{\overline{\omega}}(H)-\alpha_\ast I)=\mathbb{C}\lambda_\omega(I)$. And $\overline{\omega}$ is said to be {\em gapped} if $\overline{\omega}(A^\ast HA)\geqq (\alpha_\ast +\bigtriangleup)\omega(A^\ast A)$ for some $\Delta >0$, for all $A\in \Ao$ with $\lambda_\omega(A)\in (Ker (\pi_{\overline{\omega}}(H)-\alpha_\ast I))^\perp$.
\end{defn}
\begin{theorem}\label{prop414}
Let $H^\ast=H\in \A$ and $\omega\in E(\Ao)$. Suppose $\overline{\omega}$ is a nondegenerate ground state of $H$. Then the following statements are equivalent:
\begin{enumerate}[(i)]
\item $\overline{\omega}$ is a gapped ground state of $H$.
\item There exists a $\bigtriangleup>0$ such that $-i\overline{\omega}(A^\ast \delta_H(A))\geqq \bigtriangleup(\omega(A^\ast A)-|\omega(A)|^2)$ for all $A\in \Ao$.
\end{enumerate}
\end{theorem}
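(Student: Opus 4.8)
The plan is to reduce both conditions to a single inequality for the quadratic form $A\mapsto \overline{\omega}(A^*HA)-\alpha_*\,\omega(A^*A)$ on $\Ao$, and then to compare them via the shift $A\mapsto A-\omega(A)I$. First I would record the bridge identity
\[ -i\,\overline{\omega}(A^*\delta_H(A)) = \overline{\omega}(A^*HA) - \alpha_*\,\omega(A^*A), \qquad A\in\Ao, \]
which is exactly the computation in the proof of Theorem \ref{prop412}(1): expanding $\delta_H(A)=i(HA-AH)$ gives $-i\,\overline{\omega}(A^*\delta_H(A))=\overline{\omega}(A^*HA)-\overline{\omega}(A^*AH)$, and since $A^*A\in\Ao$ and $\overline{\omega}$ is an eigenstate of $H$ with eigenvalue $\alpha_*$, one has $\overline{\omega}(A^*AH)=\alpha_*\,\omega(A^*A)$. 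Thus the left-hand side of (ii) equals $\overline{\omega}(A^*HA)-\alpha_*\,\omega(A^*A)$, which is precisely the quantity whose lower bound $\Delta\,\omega(A^*A)$ is asserted by the gapped condition (i).

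Next I would translate the orthogonality constraint in Definition \ref{def413}. Writing $\xi_0=\lambda_\omega(I)$, recall that $\|\xi_0\|^2=\omega(I)=1$ because $\omega$ is a state and that $(\lambda_\omega(A)|\xi_0)=\omega(A)$ for every $A\in\Ao$. By nondegeneracy, $\mathrm{Ker}(\pi_{\overline{\omega}}(H)-\alpha_*I)=\mathbb{C}\lambda_\omega(I)$, so for $A\in\Ao$ the condition $\lambda_\omega(A)\in(\mathrm{Ker}(\pi_{\overline{\omega}}(H)-\alpha_*I))^\perp$ is equivalent to $\omega(A)=0$. With this, the implication (ii)$\Rightarrow$(i) is immediate: if $\omega(A)=0$ then $|\omega(A)|^2=0$, and (ii) together with the bridge identity gives $\overline{\omega}(A^*HA)-\alpha_*\,\omega(A^*A)\geq \Delta\,\omega(A^*A)$, that is, $\overline{\omega}(A^*HA)\geq(\alpha_*+\Delta)\omega(A^*A)$, which is (i) with the same $\Delta$.

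For (i)$\Rightarrow$(ii) I would, given an arbitrary $A\in\Ao$, pass to $A':=A-\omega(A)I$, which satisfies $\omega(A')=0$ and hence $\lambda_\omega(A')\perp\mathrm{Ker}(\pi_{\overline{\omega}}(H)-\alpha_*I)$, so that (i) applies to $A'$. Two elementary identities let the two sides match after this shift: the variance identity $\omega(A'^*A')=\omega(A^*A)-|\omega(A)|^2$ (expand using $\|\xi_0\|^2=1$ and $(\lambda_\omega(A)|\xi_0)=\omega(A)$), and the equality $-i\,\overline{\omega}(A'^*\delta_H(A'))=-i\,\overline{\omega}(A^*\delta_H(A))$. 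Applying the bridge identity to $A'$ and invoking (i) then yields exactly (ii) for $A$.

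The main obstacle is this last equality. Expanding $A'^*\delta_H(A')=A^*\delta_H(A)-\overline{\omega(A)}\,\delta_H(A)$ (using $\delta_H(I)=0$) produces a spurious term proportional to $\overline{\omega}(\delta_H(A))$, and it is precisely Theorem \ref{prop412}(2), $\overline{\omega}(\delta_H(A))=0$, that makes it vanish. So the argument hinges on having both parts of Theorem \ref{prop412} available — the variational identity (1) and the vanishing of the derivation expectation (2) — together with the nondegeneracy hypothesis, which is what lets me identify the kernel with $\mathbb{C}\lambda_\omega(I)$ and thereby equate the orthogonality constraint with $\omega(A)=0$.
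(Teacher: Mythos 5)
Your proposal is correct, and its skeleton coincides with the paper's: the same shift $A \mapsto A-\omega(A)I$ (the paper calls it $B$, writing $(\lambda_\omega(A)|\lambda_\omega(I))$ for $\omega(A)$), the same bridge identity (which is precisely the paper's \eqref{4.2}, established inside the proof of Theorem \ref{prop412}), the same use of nondegeneracy to convert the orthogonality constraint into $\omega(A)=0$, the same variance identity, and an identical implication (ii)$\Rightarrow$(i). The one place where you genuinely diverge is the cross-term cancellation in (i)$\Rightarrow$(ii). You apply the bridge identity to the shifted element $A'$ and kill the single spurious term $\overline{\omega(A)}\,\overline{\omega}(\delta_H(A))$ by invoking Theorem \ref{prop412}(2); the paper never uses that result here, but instead expands $\overline{\omega}(B^\ast HB)=\langle\pi_{\overline{\omega}}(H)\lambda_\omega(B),\lambda_\omega(B)\rangle$ into four inner-product terms and cancels the cross terms with the eigenvector relation $[\pi_{\overline{\omega}}(H)\lambda_\omega(I)]=\alpha_\ast\lambda_\omega(I)$ together with hermiticity of $H$ and $\overline{\omega}$ (giving $\langle\pi_{\overline{\omega}}(H)\lambda_\omega(A),\lambda_\omega(I)\rangle=\alpha_\ast\overline{\omega}(A)$, with $\alpha_\ast$ real), arriving at \eqref{4.4}. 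The two cancellations are equivalent in content, and both are legitimate since the ground-state hypothesis is in force. Your route buys a shorter, purely algebraic computation that makes explicit where the ground-state structure (rather than the bare eigenstate relation) enters, namely through $\overline{\omega}(\delta_H(A))=0$; the paper's route buys independence from part (2) of Theorem \ref{prop412} --- it needs only the eigenvector relation in the GNS space --- at the cost of longer inner-product bookkeeping.
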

\begin{proof}
(i)$\Rightarrow$(ii) Since $\overline{\omega}$ is nondegenerate, we have
\begin{eqnarray}
(Ker (\pi_{\overline{\omega}}(H)-\alpha_\ast I))^\perp =\{ \lambda_\omega(I)\}^\perp. \nonumber
\end{eqnarray}
For any $A\in \Ao$ we put
\begin{eqnarray}
B:= A-(\lambda_\omega(A)|\lambda_\omega(I))I.\nonumber
\end{eqnarray}
Then $B\in \Ao$ and
$\lambda_\omega(B)\in \{ \lambda_\omega(I)\}^\perp$.
Since $\overline{\omega}$ is a ground state of $H$,
we have
\begin{eqnarray}
\overline{\omega}(B^\ast HB) \geqq (\alpha_\ast +\bigtriangleup)\omega(B^\ast B). \label{4.3}
\end{eqnarray}
Thus we have
\begin{eqnarray}
&&\overline{\omega}(B^\ast HB) \nonumber\\
&=& < \pi_{\overline{\omega}}(H)\lambda_\omega(B),\lambda_\omega(B)>\nonumber \\
&=& <\pi_{\overline{\omega}}(H)(\lambda_\omega(A)-(\lambda_\omega(A)|\lambda_\omega(I))\lambda_\omega(I)),\lambda_\omega(A)-(\lambda_\omega(A)|\lambda_\omega(I))\lambda_\omega(I)> \nonumber \\
&=& <\pi_{\overline{\omega}}(H)\lambda_\omega(A),\lambda_\omega(A)>- \overline{(\lambda_\omega(A)|\lambda_\omega(I))}<\pi_{\overline{\omega}}(H)\lambda_\omega(A),\lambda_\omega(I)>\nonumber \\
&& -(\lambda_\omega(A)|\lambda_\omega(I))<\pi_{\overline{\omega}}(H)\lambda_\omega(I),\lambda_\omega(A)>\nonumber \\
&&+(\lambda_\omega(A)|\lambda_\omega(I))\overline{(\lambda_\omega(A)|\lambda_\omega(I))}<\pi_{\overline{\omega}}(H)\lambda_\omega(I),\lambda_\omega(I)> \nonumber \\
&=& <\pi_{\overline{\omega}}(H)\lambda_\omega(A),\lambda_\omega(A)>-\overline{\omega(A)}(\alpha_\ast \omega(A))\nonumber \\
&&-\omega(A)(\alpha_\ast \lambda_\omega(I)|\lambda_\omega(A))+|\omega(A)|^2(\alpha_\ast|\omega(I)|^2)\nonumber \\
&=& <\pi_{\overline{\omega}}(H)\lambda_\omega(A),\lambda_\omega(A)>-\alpha_\ast|\omega(A)|^2,\nonumber
\end{eqnarray}
so by \eqref{4.2}
\begin{eqnarray}
\overline{\omega}(B^\ast HB)=-i\overline{\omega}(A^\ast \delta_H(A))+\alpha_\ast\omega(A^\ast A)-\alpha_\ast |\omega(A)|^2. \label{4.4}
\end{eqnarray}
In the above equations we used the following equalities
\begin{eqnarray}
[\pi_{\overline{\omega}}(H)\lambda_\omega(I)]
&=& \alpha_\ast \lambda_\omega(I)\nonumber \\
<\pi_{\overline{\omega}}(H)\lambda_\omega(A),\lambda_\omega(I)>
&=& \overline{\omega}(HA)=\overline{\overline{\omega}(A^\ast H)}\nonumber \\
&=& \overline{\alpha_\ast \overline{\omega}(A^\ast)}= \alpha_\ast \overline{\omega}(A).\nonumber
\end{eqnarray}
Since
\begin{eqnarray}
(\alpha_\ast+\bigtriangleup)\omega(B^\ast B)
&=& (\alpha_\ast+\bigtriangleup)\omega((A^\ast -\overline{\omega(A)}I)(A-\omega(A)I))\nonumber \\
&=& (\alpha_\ast+\bigtriangleup)(\omega(A^\ast A)-|\omega(A)|^2),\nonumber
\end{eqnarray}
it follows from \eqref{4.4} that
\begin{eqnarray}
-i\overline{\omega}(A^\ast \delta_H(A))+\alpha_\ast\omega(A^\ast A)-\alpha_\ast |\omega(A)|^2
\geqq (\alpha_\ast+\bigtriangleup)(\omega(A^\ast A)-|\omega(A)|^2).\nonumber
\end{eqnarray}
Thus we have
\begin{eqnarray}
-i\overline{\omega}(A^\ast \delta_H(A)) \geqq \bigtriangleup (\omega(A^\ast A)-|\omega(A)|^2).\nonumber
\end{eqnarray}
(ii)$\Rightarrow$(i) Take an arbitrary $A\in \Ao$ such that $\lambda_\omega(A)\in \{\lambda_\omega(I)\}^\perp$. Then by \eqref{4.2} and assumption (ii) we have
\begin{eqnarray}
\overline{\omega}(A^\ast HA)
&=& <\pi_{\overline{\omega}}(H)\lambda_\omega(A),\lambda_\omega(A)>\nonumber \\
&=& -i \overline{\omega}(A^\ast \delta_H(A))+\alpha_\ast\omega(A^\ast A)\nonumber \\
&\geqq& \bigtriangleup(\omega(A^\ast A)-|\omega(A)|^2)+\alpha_\ast \omega(A^\ast A)\nonumber \\
&=& (\alpha_\ast+\bigtriangleup)\omega(A^\ast A)-\bigtriangleup |\omega(A)|^2\nonumber\\
&=& (\alpha_\ast+\bigtriangleup)\omega(A^\ast A) \nonumber.
\end{eqnarray}
 {In the above equations we used the fact that} $\lambda_\omega(A)\in\{\lambda_\omega(I)\}^\perp$, so $\omega(A)=$$(\lambda_\omega(A)|\lambda_\omega(I))=0$. This completes the proof.
\end{proof}

\section{{A brief digression:} A locally convex $\ast$-algebra constructed from $\Ao(\|\cdot\|)$}

Let $\A(\|\cdot\|)$ be a CQ*-algebra over the C*-algebra $\Ao$.
As shown in Section 3, any $\|\cdot\|$-continuous positive linear functional $\omega$ on $\Ao$ is extendable to a $\|\cdot\|$-continuous (positive) linear functional $\overline{\omega}$ on $\A$ for which the GNS-construction is possible, but the usual operator GNS-construction for $\overline{\omega}$ is impossible in general.
For this reason, in this section we define a locally convex $\ast$-algebra $\A_1$ containing $\Ao$ such that any $\|\cdot\|$-continuous positive linear functional $\omega$ on $\Ao(\|\cdot\|)$ is extendable to an admissible positive linear functional $\overline{\omega}$ on $\A_1$, that is, $\pi_{\overline{\omega}}(X)$ is a bounded linear operator on $\Hil_{\overline{\omega}}$ for all $X\in \A_1$.
For any $N\in\mathbb{N}$ we define a metric space $(\Ao(N),d_N)$ by
\begin{eqnarray}
\Ao(N)
&=& \{ A\in \Ao; \;\;\; \|A\|\leqq N\}, \nonumber \\
d_N(A,B)
&=& \| A-B\|, \;\;\; A,B\in \Ao(N).\nonumber
\end{eqnarray}
Then,
\begin{eqnarray}
\Ao(N_1)
&\subset& \Ao(N_2) \;\;\; {\rm if} \;\;\; N_1\leqq N_2, \nonumber \\
\Ao
&=& \cup_{N\in\mathbb{N}}\Ao(N).\nonumber
\end{eqnarray}
Hence we implement an inductive limit topology $\tau_{ind}$ on $\Ao$ defined by the sequence $\{ (\Ao(N),d_N)\}$ of metric spaces, that is, $\tau_{ind}$-$\lim A_n=A$ if and only if $\{ A_n \} \subset \Ao(N)$ for some $N\in\mathbb{N}$ and $\lim_{n\rightarrow\infty}\| A_n-A\|=0$.

We denote by $\A_1$ the completion of $\Ao$ under the inductive limit topology $\tau_{ind}$.
Then we have the following
\begin{proposition}\label{prop51}
$\A_1$ is a locally convex $\ast$-algebra under the norm $\|\cdot\|$ satisfying
\begin{eqnarray}
\Ao=\cup_{N\in\mathbb{N}}\Ao(N) \subset \A_1=\cup_{N\in\mathbb{N}}\overline{\Ao(N)}[d_N] \subset \A , \label{5.1}
\end{eqnarray}
where $\overline{\Ao(N)}[d_N]$ is the completion of the metric space $\Ao(N)[d_N]$.
\end{proposition}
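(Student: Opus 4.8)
The plan is to realize every object concretely inside $\A$ and then verify that the algebraic structure survives the completion, the only nontrivial point being the total multiplication. \emph{First}, I would identify the abstract pieces $\overline{\Ao(N)}[d_N]$ with genuine subsets of $\A$. Since $d_N$ is just the restriction of $\|\cdot\|$ to $\Ao(N)$ and $\A$ is $\|\cdot\|$-complete, every $d_N$-Cauchy sequence in $\Ao(N)$ is $\|\cdot\|$-Cauchy and so converges in $\A$; conversely the $\|\cdot\|$-closure of $\Ao(N)$ in $\A$ is complete and contains $\Ao(N)$ densely. Hence $\overline{\Ao(N)}[d_N]$ is isometric to the $\|\cdot\|$-closure of $\Ao(N)$ in $\A$, which I denote again $\overline{\Ao(N)}$. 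These are nested ($\overline{\Ao(N_1)}\subseteq\overline{\Ao(N_2)}$ for $N_1\le N_2$) and each is a Banach space for $\|\cdot\|$, so putting $\A_1:=\bigcup_N\overline{\Ao(N)}$ gives the inclusions $\Ao\subseteq\A_1\subseteq\A$ at once. To see that this union is exactly the $\tau_{ind}$-completion of $\Ao$, note that, by the description of $\tau_{ind}$, a sequence is $\tau_{ind}$-Cauchy precisely when it lies in a single $\Ao(N)$ and is $d_N$-Cauchy there; sending it to its $\|\cdot\|$-limit gives a well-defined bijection onto $\bigcup_N\overline{\Ao(N)}[d_N]$ that is a homeomorphism for the inductive-limit topology, which yields the displayed identity and the completeness of $\A_1$.

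The involution is easy: by property (iii) the map $A\mapsto A^*$ is a $\|\cdot\|$-isometry sending $\Ao(N)$ onto itself, hence it extends to each $\overline{\Ao(N)}$ and to an isometric involution of $\A_1$. The genuinely delicate point, and the heart of the statement, is that $\A_1$ carries a \emph{total} multiplication, in contrast with $\A$ itself where the product of two arbitrary elements need not exist. Given $X\in\overline{\Ao(N)}$ and $Y\in\overline{\Ao(M)}$, I would pick representing sequences $A_n,B_n$ from $\Ao(N),\Ao(M)$ and estimate
\[
\|A_nB_n-A_mB_m\|\le \|A_n\|_0\,\|B_n-B_m\|+\|A_n-A_m\|\,\|B_m\|_0,
\]
using the coupling inequalities (iii)$'$ and (ii); since the $\|\cdot\|_0$-norms $\|A_n\|_0,\|B_m\|_0$ are bounded by the radii of the respective pieces, the right-hand side tends to $0$. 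This is exactly where the passage to the bounded pieces $\Ao(N)$ is indispensable: the same two-term splitting in all of $\A$ fails because no uniform bound is available, which is precisely why the full multiplication is unavailable on $\A$. Consequently $\{A_nB_n\}$ is $\|\cdot\|$-Cauchy, its limit $XY:=\lim_n A_nB_n$ lies in $\overline{\Ao(NM)}\subseteq\A_1$, and passing to the limit from $\Ao$ delivers independence of the chosen sequences, bilinearity, associativity, and $(XY)^*=Y^*X^*$.

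Finally, reading the same inequality as $\|XY-X'Y'\|\le N\|Y-Y'\|+M\|X-X'\|$ on the pair of balls shows that multiplication, restricted to each product $\overline{\Ao(N)}\times\overline{\Ao(M)}\to\overline{\Ao(NM)}$, is jointly continuous; since a map out of the inductive limit is continuous as soon as its restrictions to the steps are, multiplication is continuous for $\tau_{ind}$. Together with the isometric involution and the fact that each step $\overline{\Ao(N)}$ is a Banach disc for $\|\cdot\|$, this establishes that $\A_1$ is a locally convex $*$-algebra, completing the proof. The main obstacle, as indicated, is the Cauchy estimate for products together with the verification that the resulting multiplication is well defined and continuous; the identification of the completion with $\bigcup_N\overline{\Ao(N)}[d_N]$ is routine once the steps are recognized as $\|\cdot\|$-closures in $\A$.
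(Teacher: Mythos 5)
Your proposal is correct and follows essentially the same route as the paper's proof: you realize the pieces $\overline{\Ao(N)}[d_N]$ as $\|\cdot\|$-closures inside $\A$, construct the product through the same two-term Cauchy estimate based on the coupling inequalities (ii) and (iii)$^\prime$ together with the uniform $\|\cdot\|_0$-bound on each piece, check independence of the approximating sequences, and extend the involution isometrically. Two remarks are in order. First, both your argument and the paper's require that membership in $\Ao(N)$ bound the $\|\cdot\|_0$-norm by $N$ (you say the norms $\|A_n\|_0,\|B_m\|_0$ are ``bounded by the radii of the respective pieces''); with the definition as printed, $\Ao(N)=\{A\in\Ao:\|A\|\leq N\}$, this is false, since $\|\cdot\|\leq\|\cdot\|_0$ only bounds the weaker norm. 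The construction works only if $\Ao(N)$ is read as the $\|\cdot\|_0$-ball, which is evidently the intended meaning (the paper's proof of Proposition \ref{prop52} uses the bound $\|A_n\|_0\leq N$ for $A_n\in\Ao(N)$ as well), so this is faithful to the paper rather than a defect of your argument. Second, your final continuity step overclaims: the universal property of locally convex inductive limits applies to \emph{linear} maps out of $\A_1$, not to the bilinear multiplication map on $\A_1\times\A_1$, because the product topology on $\A_1\times\A_1$ is not the inductive limit of the sets $\overline{\Ao(N)}\times\overline{\Ao(M)}$; joint continuity of multiplication does not follow from continuity on the steps by that principle. What does follow cleanly from your estimate is separate continuity: fixing $X\in\overline{\Ao(N)}$, the linear map $Y\mapsto XY$ is continuous on each step, hence $\tau_{ind}$-continuous. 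That is exactly what the paper establishes (its bounds $\|XY\|\leq N\|X\|$ and $\|XY\|\leq N\|Y\|$, i.e.\ multiplication is ``separating continuous'') and is all that the notion of locally convex $*$-algebra requires here, so your proof reaches the stated conclusion once this step is phrased as separate continuity.
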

\begin{proof}
Clearly, $\A_1[\|\cdot\|]$ is a subspace of the Banach space $\A$.
We can define a multiplication of $XY$ of $X$ and $Y$ in $\A_1$.
Indeed, take arbitrary $X,Y\in \A_1$.
There exist sequences $\{ A_n\}$ and $\{ B_n\}$ in $\Ao(N)$ for some $N\in\mathbb{N}$ such that $\lim_{n\rightarrow\infty}\| A_n-X\|=\lim_{n\rightarrow\infty}\|B_n-Y\|=0$.
Then since
\begin{eqnarray}
\| A_mB_m-A_nB_n\|
&\leqq& \|(A_m-A_n)B_m\| +\| A_n(B_m-B_n)\| \nonumber \\
&\leqq& \|B_m\|_0 \|A_m-A_n\|+\|A_n\|_0 \|B_m-B_n\| \nonumber \\
&\leqq& N(\|A_m-A_n\|+\|B_m-B_n\|) , \nonumber
\end{eqnarray}
$\{A_nB_n\}$ is a Cauchy sequence in $\Ao(N)[d_N]$, so $\lim_{n\rightarrow\infty}A_nB_n$ exists in $\overline{\Ao(N)}[d_N]$.
Furthermore, for any sequences $\{ A_n^\prime\}$ and $\{B_n^\prime\}$ in $\Ao(N^\prime)$ such that $\lim_{n\rightarrow\infty}\|A_n^\prime-X\|=\lim_{n\rightarrow\infty}\|B_n^\prime-Y\|=0$.
Then
\begin{eqnarray}
\|A_nB_n-A_n^\prime B_n^\prime\|
&\leqq& \|(A_n-A_n^\prime)B_n\|+\| A_n^\prime(B_n-B_n^\prime)\|\nonumber \\
&\leqq& N^\prime (\|A_n-A_n^\prime\| +\|B_n-B_n^\prime\|) \nonumber
\end{eqnarray}
for all $n\in\mathbb{N}$, so $\|\cdot\|$-$\lim_{n\rightarrow\infty} A_nB_n$ exists in $\A_1$ and it is independent for the method of taking sequences $\{ A_n\}$ and $\{ B_n\}$.
Thus we can define the multiplication $XY$ in $\A_1$ by
\begin{eqnarray}
XY= \| \cdot\|-\lim_{n\rightarrow\infty}A_nB_n, \nonumber
\end{eqnarray}
and it satisfies the following
\begin{eqnarray}
\| XY\|
&=& \lim_{n\rightarrow\infty}\| A_nB_n\| \nonumber \\
&\leqq& \lim_{n\rightarrow\infty}\| A_n\|\|B_n\|_0 \nonumber \\
&\leqq& N \lim_{n\rightarrow\infty} \|A_n\| \nonumber \\
&=& N\| X\|, \nonumber
\end{eqnarray}
and similarly
\begin{eqnarray}
\| XY\| \leqq N\|Y\|, \nonumber
\end{eqnarray}
so the multiplication of $\A_1[|\cdot\|]$ is separating continuous.
Furthermore, since $\| X^\ast\|=\| X\|$, $\A_1[\|\cdot\|]$ is a locally convex $\ast$-algebra.
\eqref{5.1} is trivial. This completes the proof.
\end{proof}

Let $\omega$ be a $\|\cdot\|$-continuous positive linear functional on $\Ao$.
The restriction of the positive linear functional $\overline{\omega}$ on $\A$ to $\A_1$ (we use the same notation $\overline{\omega}$) is a positive linear functional on the locally convex $\ast$-algebra $\A_1[\|\cdot\|]$, so its GNS-construction $(\pi_{\overline{\omega}},\lambda_{\overline{\omega}},\Hil_{\overline{\omega}})$ is possible.
We have the following
\begin{proposition}\label{prop52}
Let $(\pi_{\overline{\omega}},\lambda_{\overline{\omega}},\Hil_{\overline{\omega}})$ be the GNS-construction for a $\|\cdot\|$-continuous positive linear functional $\omega$ on $\Ao$.
Then the $\|\cdot\|$-continuous positive linear functional $\overline{\omega}$ on $\A_1$ is admissible and its GNS-construction $(\pi_{\overline{\omega}},\lambda_{\overline{\omega}},\Hil_{\overline{\omega}})$ satisfies the following properties:
\begin{enumerate}
\item $\Hil_{\overline{\omega}}=\Hil_\omega$.
\item $\pi_{\overline{\omega}}(A)=\pi_\omega(A)$ and $\lambda_{\overline{\omega}}(A)=\lambda_\omega(A)$ for all $A\in \Ao$.
\item For any $X\in \A_1$ there exists an sequence $\{ A_n\} \subset \Ao(N)$ for some $N\in\mathbb{N}$ such that $\pi_\omega(A_n)\mapsto \pi_{\overline{\omega}}(X)$, strongly, namely for any $x\in \Hil_\omega$ $\lim_{n\rightarrow\infty}\pi_\omega(A_n)x=\pi_{\overline{\omega}}(X)x$. So, $\pi_{\overline{\omega}}(\A_1)$ is contained in the bicommutant $\pi_\omega(\Ao)^{\prime\prime}$ of the bounded $\ast$-algebra $\pi_\omega(\Ao)$ on $\Hil_\omega$.
\item For any $X\in \A_1$ there exists an sequence $\{ A_n\}\subset \Ao(N)$ for some $N\in\mathbb{N}$ such that $\lim_{n\rightarrow\infty}\lambda_\omega(A_n)=\lambda_{\overline{\omega}}(X)$.
\end{enumerate}
\end{proposition}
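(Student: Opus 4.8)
The plan is to perform the abstract GNS construction for the positive linear functional $\overline{\omega}$ on the locally convex $\ast$-algebra $\A_1$ and then to identify every piece of the resulting data with the C*-algebraic GNS data of $\omega$, exploiting the inductive limit structure of $\A_1$ given by Proposition~\ref{prop51}. The guiding point is that the estimates which failed in Section~3 for arbitrary $\|\cdot\|$-approximating sequences (because multiplication is not $\|\cdot\|$-continuous on all of $\A$) now succeed, precisely because every $X\in\A_1$ is a $\|\cdot\|$-limit of a sequence confined to a single $\Ao(N)$, hence $\|\cdot\|_0$-bounded.

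I would first dispose of (1) and (2). Since $\Ao$ is $\tau_{ind}$-dense in $\A_1$ and $\overline{\omega}$ is $\|\cdot\|$-continuous, the sesquilinear form $(X,Y)\mapsto\overline{\omega}(Y^\ast X)$ restricts on $\Ao\times\Ao$ to $(A,B)\mapsto\omega(B^\ast A)$, i.e.\ to the pre-inner product of the GNS construction of $\omega$. Hence $\lambda_{\overline{\omega}}(A)=\lambda_\omega(A)$ for $A\in\Ao$, and since $\lambda_\omega(\Ao)$ is dense in $\Hil_\omega$ while $\lambda_{\overline{\omega}}(\Ao)$ is dense in $\Hil_{\overline{\omega}}$, the two completions agree: $\Hil_{\overline{\omega}}=\Hil_\omega$. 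The identity $\pi_{\overline{\omega}}(A)\lambda_\omega(B)=\lambda_{\overline{\omega}}(AB)=\lambda_\omega(AB)=\pi_\omega(A)\lambda_\omega(B)$ then gives $\pi_{\overline{\omega}}(A)=\pi_\omega(A)$ on $\Ao$.

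The core is (3) together with admissibility. Fix $X\in\A_1$ and, by Proposition~\ref{prop51}, pick $N\in\NN$ and $\{A_n\}\subset\Ao(N)$ with $\|A_n-X\|\to0$; then $\|A_n\|_0\le N$, so $\|\pi_\omega(A_n)\|\le\|A_n\|_0\le N$ and the operators $\pi_\omega(A_n)$ are uniformly bounded. For strong convergence on the dense domain $\lambda_\omega(\Ao)$ I would compute, for $B\in\Ao$ and $C:=A_n-A_m\in\Ao(2N)$,
\[
\|(\pi_\omega(A_n)-\pi_\omega(A_m))\lambda_\omega(B)\|^2=\overline{\omega}(B^\ast C^\ast CB)\le\gamma\,\|B^\ast C^\ast CB\|\le 2\gamma N\,\|B\|_0^2\,\|A_n-A_m\|,
\]
the last inequality following from (ii), (iii$^\prime$), (iii) together with $\|C\|_0\le 2N$. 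As $\{A_n\}$ is $\|\cdot\|$-Cauchy, $\{\pi_\omega(A_n)\lambda_\omega(B)\}$ converges; uniform boundedness then upgrades this to strong convergence of $\{\pi_\omega(A_n)\}$ on all of $\Hil_\omega$ to a bounded operator of norm $\le N$. This limit is $\pi_{\overline{\omega}}(X)$: indeed $A_nB\to XB$ in $\A$, and the same estimate (applied inside a fixed $\overline{\Ao(M)}$) yields $\lambda_{\overline{\omega}}(A_nB)\to\lambda_{\overline{\omega}}(XB)$, so $\pi_\omega(A_n)\lambda_\omega(B)=\lambda_{\overline{\omega}}(A_nB)\to\lambda_{\overline{\omega}}(XB)=\pi_{\overline{\omega}}(X)\lambda_\omega(B)$. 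Thus $\pi_{\overline{\omega}}(X)$ is bounded (admissibility) and equals the strong limit of $\pi_\omega(A_n)$; since each $\pi_\omega(A_n)\in\pi_\omega(\Ao)$ and the sequence is bounded, the strong limit lies in the strong closure of $\pi_\omega(\Ao)$, which by the bicommutant theorem (recall $\pi_\omega(I)=I$) is $\pi_\omega(\Ao)''$.

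Finally, (4) is the same continuity statement applied to $A_n\to X$: with $D:=A_n-X\in\overline{\Ao(2N)}$ and the bound $\|D^\ast D\|\le 2N\|D\|$ from Proposition~\ref{prop51}, one gets $\|\lambda_\omega(A_n)-\lambda_{\overline{\omega}}(X)\|^2=\overline{\omega}(D^\ast D)\le 2\gamma N\,\|A_n-X\|\to0$. The step demanding the most care is passing from strong convergence on the dense subspace $\lambda_\omega(\Ao)$ to convergence on all of $\Hil_\omega$ and identifying the limit with $\pi_{\overline{\omega}}(X)$: this is exactly where the uniform $\|\cdot\|_0$-control $\|A_n\|_0\le N$ supplied by the inductive limit structure of $\A_1$, rather than merely $\|\cdot\|$-control, is indispensable, and it is the point where the construction of $\A_1$ pays off over the bare completion $\A$.
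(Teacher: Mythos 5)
Your proof is correct and follows essentially the same route as the paper's: approximate $X\in\A_1$ by a sequence $\{A_n\}\subset\Ao(N)$, use the resulting uniform $\|\cdot\|_0$-bound together with the coupling inequalities to get estimates like $\|(A_n-X)^\ast(A_n-X)\|\leq 2N\|A_n-X\|$, deduce convergence of $\lambda_\omega(A_n)$ and strong convergence of $\pi_\omega(A_n)$ on the dense domain $\lambda_\omega(\Ao)$, and upgrade via uniform boundedness to all of $\Hil_\omega$, whence admissibility and membership in $\pi_\omega(\Ao)''$. The only cosmetic difference is that you obtain $\|\pi_{\overline{\omega}}(X)\|\leq N$ as the strong limit of the uniformly bounded operators $\pi_\omega(A_n)$, whereas the paper derives it directly from the C*-order inequality $B^\ast A_n^\ast A_n B\leq N^2 B^\ast B$ and positivity of $\omega$.
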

\begin{proof}
Take an arbitrary $X\in \A_1$. Then there exists a sequence $\{ A_n\}\subset \Ao(N)$ for some $N\in\mathbb{N}$ such that $\lim_{n\rightarrow\infty}\| A_n-X\|=0$. Then, for any $n\in\mathbb{N}$ we have 
\begin{eqnarray}
\| (A_n-X)^\ast (A_n-X)\|
&\leqq& \| A_n^\ast(A_n-X)\|+\|X^\ast(A_n-X)\| \nonumber \\
&\leqq& 2N \|A_n-X\|,\nonumber
\end{eqnarray}
so $\|\cdot\|$-$\lim_{n\rightarrow\infty}(A_n-X)^\ast(A_n-X)=0$.
Since $\overline{\omega}$ is $\|\cdot\|$-continuous, we have
\begin{eqnarray}
\lim_{n\rightarrow\infty}\|\lambda_\omega(A_n)-\lambda_{\overline{\omega}}(X)\|^2
= \lim_{n\rightarrow\infty}\overline{\omega} ((A_n-X)^\ast(A_n-X))=0 ,\nonumber
\end{eqnarray}
which implies (1) and (4).
For any $B\in \Ao,
$ $B^\ast A^\ast_n  A_nB \leqq \| A_n\|^2_0 B^\ast B \leqq N^2 B^\ast B$, so for any $X\in \A_1$
\begin{eqnarray}
\| \pi_{\overline{\omega}}(X)\lambda_\omega(B)\|^2
&=&\overline{\omega}(B^\ast X^\ast XB) \nonumber \\
&=& \lim_{n\rightarrow\infty} \omega (B^\ast A_n^\ast A_nB) \nonumber \\
&\leqq& N^2 \omega(B^\ast B) \nonumber \\
&=& N^2 \| \lambda_\omega(B)\|^2, \nonumber
\end{eqnarray}
which implies by (1) and (4) that
\begin{eqnarray}
\pi_{\overline{\omega}}(X)\in B(\Hil_{\overline{\omega}}) \;\;{\rm and}\;\; \|\pi_{\overline{\omega}}(X)\| \leqq N. \label{5.2}
\end{eqnarray}
The statement (2) is trivial.
We show (3).
For any $B\in \Ao$ we have
\begin{eqnarray}
\|\pi_\omega (A_n)\lambda_\omega(B) -\pi_{\overline{\omega}}(X)\lambda_\omega(B)\|^2
&=& \overline{\omega} (B^\ast (A_n^\ast-X)^\ast (A_n-X)B) \nonumber \\
&\leqq& \gamma \|B^\ast (A_n-X)^\ast(A_n-X)B\| \nonumber \\
&\leqq& \gamma \|B\|_0^2\|(A_n-X)^\ast(A_n-X) \| \nonumber \\
&\rightarrow& 0 \;\;\;{\rm as}\;\;\; n\rightarrow\infty .\label{5.3}
\end{eqnarray}
Take an arbitrary $x\in\Hil_\omega$ and any $\varepsilon>0$. 
There exists a $B\in \Ao$ such that $\|\lambda_\omega(B)-x\|<\varepsilon$.
Then, for any $n\in\mathbb{N}$, it follows from \eqref{5.2} that
\begin{eqnarray}
\| \pi_\omega(A_n)x-\pi_{\overline{\omega}}(X)x\|
&\leqq& \| (\pi_\omega(A_n)-\pi_\omega(X))(x-\lambda_\omega(B))\| \nonumber \\
&& + \|(\pi_\omega(A_n)-\pi_\omega(X))\lambda_\omega(B)\| \nonumber \\
&\leqq& N\| x-\lambda_\omega(B)\|+\| (\pi_\omega(A_n)-\pi_\omega(X))(B)\|\lambda, \nonumber
\end{eqnarray}
which implies (3). This completes the proof.
\end{proof}
For the admissible positive linear functionals $\overline{\omega}$ on $\A_1$ we can define the notions of eigenstates, dynamics and  ground states, and obtain the same results studied in Section 4.

 {\section{Conclusions}\label{sect7}

We have proposed a possible extension of the notion of eigenstates for CQ*-algebras, and we have deduced several of their properties. In particular, we have exploited some connections between these states and dynamical systems.

Our analysis of generalized eigenstates in an algebraic settings is far from being completed. We believe that there exist still many aspects which deserve further analysis, both from a mathematical point of view and for their physical applications. Just to cite two interesting topics we plan to consider in a close future, we mention the case of non Hermitian Hamiltonian $H$ in the definition of the dynamics, which has triggered the interest of many scholars in the past decades, \cite{benbook}, and the construction of generalized eigenstates and eigenvalues in the context of Section 6 . 

\section*{Acknowledgements}

F. B., C.T. and S. T. acknowledge partial financial support from Palermo University, and from G.N.F.M.  and G.N.A.M.P.A. of the INdAM. H. I. acknowledges partial financial support from Kyushu Sangyo University.
 S. T. acknowledge the project: D26 PREMIO GRUPPI RIC2023 TRIOLO SALVATORE. F.B. and S.T. acknowledge the project: ICON-Q, Partenariato Esteso NQSTI - PE00000023, Spoke 2. F. B. also acknowledge partial support from the PRIN grant {\em Transport phenomena in low dimensional
 	structures: models, simulations and theoretical aspects} - project code 2022TMW2PY - CUP B53D23009500006. 

\section*{Ethics statement}

This work did not involve any active collection of human data.

\section*{Data accessibility statement}

This work does not have any experimental data.

\section*{Competing interests statement}

We have no competing interests.}








\end{document}